\titleformat*{\section}{\Large\bfseries}
\DeclareMathOperator*{\argmax}{arg\,max}
\newcommand{\indicate}[1]{\mathbf{1}_{\{#1\}}}
\newtheorem{theorem}{Theorem}
\newtheorem{proposition}{Proposition}
\newtheorem{assumption}{Assumption}
\theoremstyle{definition}
\newtheorem{example}{Example}
\newcommand*{\Atilde}{\skew{5}{\tilde}{\mathcal A}}
\def\pid{\pi^\textnormal{d}}
\def\pis{\pi^\textnormal{s}}
\def\pia{\pi^\textnormal{a}}
\def\E{\mathbf{E}}
\def\P{\mathbf{P}}
\def\x{\mathbf{x}}
\def\w{\mathbf{w}}
\def\W{\mathbf{W}}
\def\a{\mathbf{a}}
\def\y{\mathbf{y}}
\def\taue{{\tau_{\text{e}}}}
\def\taus{{\tau_{\text{s}}}}
\newlength\mylen
\titleformat*{\section}{\large\bfseries}
\titleformat*{\subsection}{\normalsize\bfseries}
\titleformat*{\subsubsection}{\normalsize\bfseries}
\titleformat*{\paragraph}{\normalsize\bfseries}
\titleformat*{\subparagraph}{\normalsize\bfseries}
\titlespacing*{\section}{0pt}{9pt}{9pt}
\titlespacing*{\subsection}{0pt}{7pt}{7pt}
\titlespacing*{\subsubsection}{0pt}{7pt}{7pt}
\begin{document}

\onehalfspacing

\title{\textbf{\Large Monte Carlo Tree Search with Sampled\\
  Information Relaxation Dual Bounds}}
\author{%
Daniel R. Jiang, Lina Al-Kanj, Warren B. Powell
}
\maketitle
\vspace{0em}
\begin{abstract}
Monte Carlo Tree Search (MCTS), most famously used in game-play artificial intelligence (e.g., the game of Go), is a well-known strategy for constructing approximate solutions to sequential decision problems. Its primary innovation is the use of a heuristic, known as a \emph{default policy}, to obtain Monte Carlo estimates of downstream values for states in a decision tree. This information is used to iteratively expand the tree towards regions of states and actions that an optimal policy might visit. However, to guarantee convergence to the optimal action, MCTS requires the entire tree to be expanded asymptotically. In this paper, we propose a new technique called Primal-Dual MCTS that utilizes sampled \emph{information relaxation} upper bounds on potential actions, creating the possibility of ``ignoring'' parts of the tree that stem from highly suboptimal choices. This allows us to prove that despite converging to a partial decision tree in the limit, the recommended action from Primal-Dual MCTS is optimal. The new approach shows significant promise when used to optimize the behavior of a single driver navigating a graph while operating on a ride-sharing platform. Numerical experiments on a real dataset of 7{,}000 trips in New Jersey suggest that Primal-Dual MCTS improves upon standard MCTS by producing deeper decision trees and exhibits a reduced sensitivity to the size of the action space.

\end{abstract}

\setlist[itemize]{topsep=2pt,itemsep=0pt}
\setlength{\abovedisplayskip}{0.2cm}
\setlength{\belowdisplayskip}{0.2cm}
\setlist[itemize]{topsep=1pt,itemsep=-1ex}
\setlist[enumerate]{topsep=1pt,itemsep=-1ex}
\setlength{\belowcaptionskip}{0pt}

\vspace{20pt}
\section{Introduction}
The Monte Carlo Tree Search (MCTS) is a technique popularized by the artificial intelligence (AI) community \citep{Coulom2007} for solving sequential decision problems with finite state and action spaces.
To avoid searching through an intractably large decision tree, MCTS instead iteratively builds the tree and attempts to focus on regions composed of states and actions that an optimal policy might visit. A heuristic known as the \emph{default policy} is used to provide Monte Carlo estimates of downstream values, which serve as a guide for MCTS to explore promising regions of the search space. When the allotted computational resources have been expended, the hope is that the best first stage decision recommended by the \emph{partial decision tree} is a reasonably good estimate of the optimal decision that would have been implied by the full tree.

The applications of MCTS are broad and varied, but the strategy is traditionally most often applied to game-play AI \citep{Chaslot2008}. To name a few specific applications, these include Go \citep{Chaslot2006,Gelly2011,Gelly2012,Silver2016}, Othello \citep{Hingston2007,Nijssen2007,Osaki2008,Robles2011}, Backgammon \citep{VanLishout2007}, Poker \citep{Maitrepierre2008,VandenBroeck2009,Ponsen2010}, 16 $\times$ 16 Sudoku \citep{Cazenave2009}, and even general game playing AI \citep{Mehat2010}. We remark that a characteristic of games is that the transitions from state to state are deterministic; because of this, the standard specification for MCTS deals with deterministic problems. The ``Monte Carlo'' descriptor in name of MCTS therefore refers to stochasticity in the default policy. A particularly thorough review of both the MCTS methodology and its applications can be found in \cite{Browne2012a}.

The adaptive sampling algorithm by \cite{Chang2005}, introduced within the operations research (OR) community, leverages a well-known bandit algorithm called UCB (upper confidence bound) for solving MDPs. The UCB approach is also extensively used for successful implementations of MCTS \citep{Kocsis2006}. Although the two techniques share similar ideas, the OR community has generally not taken advantage of the MCTS methodology in applications, with the exception of two recent papers. The paper \cite{Bertsimas2014} compares MCTS with rolling horizon mathematical optimization techniques (a standard method in OR) on a large scale dynamic resource allocation problem, specifically that of tactical wildfire management. \cite{Al-Kanj2016} applies MCTS to an information-collecting vehicle routing problem, which is an extension of the classical vehicle routing model where the decisions now depend on a \emph{belief state}. Not surprisingly, both of these problems are intractable via standard Markov decision process (MDP) techniques, and results from these papers suggest that MCTS could be a viable alternative to other approximation methods (e.g., approximate dynamic programming). However, \cite{Bertsimas2014} finds that MCTS is competitive with rolling horizon techniques only on smaller instances of the problems and their evidence suggests that MCTS can be quite sensitive to large action spaces. In addition, they observe that large actions spaces are more detrimental to MCTS than large state spaces. These observations form the basis of our \emph{first research motivation}: can we control the action branching factor by making ``intelligent guesses'' at which actions may be suboptimal? If so, potentially suboptimal actions can be ignored.

Next, let us briefly review the currently available convergence theory. The work of \cite{Kocsis2006} uses the UCB algorithm to sample actions in MCTS, resulting in an algorithm called UCT (upper confidence trees). A key property of UCB is that every action is sampled infinitely often and \cite{Kocsis2006} exploit this to show that the probability of selecting a suboptimal action converges to zero at the root of the tree. \cite{Silver2010} use the UCT result as a basis for showing convergence of a variant of MCTS for partially observed MDPs. \cite{Coutoux2011} extends MCTS for deterministic, finite state problems to stochastic problems with continuous state spaces using a technique called \emph{double progressive widening}. The paper \cite{Auger2013} provides convergence results for MCTS with double progressive widening under an action sampling assumption. In these papers, the asymptotic convergence of MCTS relies on some form of ``exploring every node infinitely often.'' However, given that the spirit of the algorithm is to build partial trees that are biased towards nearly optimal actions, we believe that an alternative line of thinking deserves further study. Thus, our \emph{second research motivation} is: can we design a version of MCTS that asymptotically \emph{does not} expand the entire tree, yet is still optimal?

By far, the most significant recent development in this area is Google Deepmind's development of AlphaGo, the first computer to defeat a human player in the game of Go, of which MCTS plays a major role \citep{Silver2016}. The authors state, ``The strongest current Go programs are based on MCTS, enhanced by policies that are trained to predict human expert moves.'' To be more precise, the \emph{default policy} used by AlphaGo is carefully constructed through several steps: (1) a classifier to predict expert moves is trained using 29.4 million game positions from 160,000 games on top of a deep convolutional neural network (consisting of 13 layers); (2) the classifier is then played against itself and a policy gradient technique is used to develop a policy that aims to win the game rather than simply mimic human players; (3) another deep convolutional neural network is used to approximate the \emph{value function} of the heuristic policy; and (4) a combination of the two neural networks, dubbed the policy and value networks, provides an MCTS algorithm with the default policy and the estimated downstream values. The current discussion is a perfect illustration of the motivation behind our \emph{third research motivation}: if such a remarkable amount of effort is used to design a default policy, can we develop techniques to further exploit this heuristic guidance within the MCTS framework?

In this paper, we address each of these questions by proposing a novel MCTS method, called \emph{Primal-Dual MCTS} (the name is inspired by \cite{Andersen2004}), that takes advantage of the \emph{information relaxation bound} idea (also known as \emph{martingale duality}) first developed in \cite{Haugh2004} and later generalized by \cite{Brown2010}. The essence of information relaxation is to relax nonanticipativity constraints (i.e., allow the decision maker to use future information) in order to produce upper bounds on the objective value (assuming a maximization problem). To account for the issue that a naive use of future information can produce weak bounds, \cite{Brown2010} describes a method to penalize the use of future information so that one may obtain a tighter (smaller) upper bound. This is called a \emph{dual approach} and it is shown that value of the upper bound can be made equal to the optimal value if a particular penalty function is chosen that depends on the optimal value function of the original problem. Information relaxation has been used successfully to estimate the sub-optimality of policies in a number of application domains, including option pricing \citep{Andersen2004}, portfolio optimization \citep{Brown2011}, valuation of natural gas \citep{Lai2011b,Nadarajah2015}, optimal stopping \citep{Desai2012a}, and vehicle routing \citep{Goodson2016}. 
More specifically, the contributions of this paper are as follows.
\begin{itemize}
\item We propose a new MCTS method called Primal-Dual MCTS that utilizes the information relaxation methodology of \cite{Brown2010} to generate dual upper bounds. These bounds are used when MCTS needs to choose actions to explore (this is known as \emph{expansion} in the literature). When the algorithm considers performing an expansion step, we obtain \emph{sampled} upper bounds (i.e., in expectation, they are greater than the optimal value) for a set of potential actions and select an action with an upper bound that is \emph{better} than the value of the current optimal action. Correspondingly, if all remaining unexplored actions have upper bounds lower than the value of the current optimal action, then we do not expand further. This addresses our first research motivation of reducing the branching factor in a principled way.

\item We prove that our method converges to the optimal action (and optimal value) at the root node. This holds even though our proposed technique does not preclude the possibility of a partially expanded tree in the limit. By carefully utilizing the upper bounds, we are able to ``provably ignore'' entire subtrees, thereby reducing the amount of computation needed. This addresses our second research motivation, which extends the current convergence theory of MCTS.

\item Although there are many ways to construct the dual bound, one special instance of Primal-Dual MCTS uses the default policy (the heuristic for estimating downstream values) to induce a penalty function. This addresses our third research motivation: the default policy can provide actionable information in the form of upper bounds, in addition to the original intention of estimating downstream values.

\item Lastly, we present a model of the stochastic optimization problem faced by a single driver who provides transportation for fare-paying customers while navigating a graph. The problem is motivated by the need for ride-sharing platforms (e.g., Uber and Lyft) to be able to accurately simulate the operations of an entire ride-sharing system/fleet. Understanding human drivers' behaviors is crucial to a smooth integration of platform controlled driver-less vehicles with the traditional contractor model (e.g., in Pittsburgh, Pennsylvania). Our computational results show that Primal-Dual MCTS dramatically reduces the breadth of the search tree when compared to standard MCTS.
\end{itemize}

The paper is organized as follows. In Section \ref{sec:prelim}, we describe a general model of a stochastic sequential decision problem and review the standard MCTS framework along with the duality and information relaxation procedures of \cite{Brown2010}. We present the algorithm, Primal-Dual MCTS, in Section \ref{sec:alg}, and provide the convergence analysis in Section \ref{sec:conv}. The ride-sharing model and the associated numerical results are discussed in Section \ref{sec:num} and we provide concluding remarks in Section \ref{sec:conc}.

\section{Preliminaries}
\label{sec:prelim}
In this section, we first formulate the mathematical model of the underlying optimization problem as an MDP. Because we are in the setting of decision trees and information relaxations, we need to extend traditional MDP notation with some additional elements. We also introduce the existing concepts, methodologies, and relevant results that are used throughout the paper. 

\subsection{Mathematical Model}
\label{subsec:model}
As is common in MCTS, we consider an underlying MDP formulation with a finite horizon $t = 0,1,\ldots,T$ where the set of decision epochs is $\mathcal T = \{0,1,\ldots, T-1\}$. Let $\mathcal S$ be a \emph{state space} and $\mathcal A$ be an \emph{action space} and we assume a finite state and action setting: $|\mathcal S| < \infty$ and $|\mathcal A| < \infty$. The set of feasible actions for state $s\in \mathcal S$ is $\mathcal A_s$, a subset of $\mathcal A$. The set $\mathcal U = \{(s,a) \in \mathcal S \times \mathcal A : a \in \mathcal A_s\}$ contains all feasible state-action pairs.

The dynamics from one state to the next depend on the action taken at time $t$, written $a_t \in \mathcal A$, and an exogenous (i.e., independent of states and actions) random process $\{W_t\}_{t=1}^T$ on $(\Omega, \mathcal F, \P)$ taking values in a finite space $\mathcal W$. For simplicity, we assume that $W_{t}$ are independent across time $t$. The \emph{transition function} is given by $f: \mathcal S \times \mathcal A \times \mathcal W \rightarrow \mathcal S$. We denote the deterministic initial state by $s_0 \in \mathcal S$ and let $\{S_t\}_{t=0}^T$ be the random process describing the evolution of the system state, where $S_0 = s_0$ and $S_{t+1} = f(S_t, a_t, W_{t+1})$. To distinguish from the random variable $S_t \in \mathcal S$, we shall refer to a particular element of the state space by lowercase variables, e.g., $s \in \mathcal S$. The contribution (or reward) function at stage $t < T$ is given by $c_t: \mathcal S \times \mathcal A \times \mathcal W \rightarrow \mathbb R$. For a fixed state-action pair $(s,a) \in \mathcal U$, the contribution is the random quantity $c_t(s,a,W_{t+1})$, which we assume is bounded.

Because there are a number of other ``policies'' that the MCTS algorithm takes as input parameters (to be discussed in Section \ref{subsec:mcts}), we call the main MDP policy of interest the \emph{operating policy}. Let $\Pi$ be the set of all policies for the MDP with a generic element $\pi = \{ \pi_0, \pi_2, \ldots, \pi_{T-1} \} \in \Pi$. Each decision function $\pi_t : \mathcal S \rightarrow \mathcal A$ is a deterministic map from the state space to the action space, such that $\pi_t(s) \in \mathcal A_s$ for any state $s \in \mathcal S$. Finally, we define the objective function, which is to maximize expected cumulative contribution over the finite time horizon:
\begin{equation}
\max_{\; \, \pi \in \Pi} \; \E \left[  \sum_{t=0}^{T-1} c_t (S_t,\pi_t(S_t), W_{t+1})  \, \bigl | \, S_0 = s_0 \right].
\label{eq:mdpobj}
\end{equation}
Let $V_t^*(s)$ be the optimal value function at state $s$ and time $t$. It can be defined via the standard Bellman optimality recursion:
\begin{align*}
&V_t^*(s) = \max_{a \in \mathcal A_s} \; \E  \left[  \, c_t(s,a,W_{t+1})   + V_{t+1}^*(S_{t+1}) \right] \; \text{for all } s \in \mathcal S,\; t \in \mathcal T,\\
&V_T^*(s) = 0 \; \text{for all } s \in \mathcal S.
\end{align*}
The state-action formulation of the Bellman recursion is also necessary for the purposes of MCTS as the decision tree contains both state and state-action nodes. The state-action value function is defined as:
\begin{align*}
&Q_t^*(s,a) =  \E  \left[  \, c_t(s,a,W_{t+1})   + V_{t+1}^*(S_{t+1}) \right] \; \text{for all } (s,a) \in \mathcal U,\; t \in \mathcal T.
\end{align*}
For consistency, it is also useful to let $Q_T^*(s,a) = 0$ for all $(s,a)$. It thus follows that $V_t^*(s) = \max_{a \in \mathcal A_s} Q_t^*(s,a)$. Likewise, the optimal policy $\pi^{*} = \{\pi_0^*,\ldots,\pi_{T-1}^*\}$ from the set $\Pi$ is characterized by $\pi^*_t(s) = \argmax_{a \in \mathcal A_s} Q_t^*(s,a)$.

It is also useful for us to define the value of a particular operating policy $\pi$ starting from a state $s\in \mathcal S$ at time $t$, given by the value function $V_t^\pi(s)$. If we let $S_{t+1}^\pi = f(s,\pi_t(s),W_{t+1})$, then the following recursion holds:
\begin{equation}
\begin{aligned}
&V_t^\pi(s) = \E  \left[  \, c_t(s,\pi_t(s),W_{t+1})   + V_{t+1}^\pi(S_{t+1}^\pi) \right] \; \text{for all } s \in \mathcal S,\; t \in \mathcal T,\\
&V_T^\pi(s) = 0 \; \text{for all } s \in \mathcal S.
\end{aligned}
\label{eq:bellman1}
\end{equation}
Similarly, we have
\begin{equation}
\begin{aligned}
&Q_t^\pi(s,a) =  \E  \left[  \, c_t(s,a,W_{t+1})   + V_{t+1}^\pi(S_{t+1}^\pi) \right] \; \text{for all } (s,a) \in \mathcal S \times \mathcal A,\; t \in \mathcal T,\\
&Q_T^\pi(s,a) = 0 \; \text{for all } (s,a) \in \mathcal S \times \mathcal A,
\end{aligned}
\label{eq:bellman2}
\end{equation}
the state-action value functions for a given operating policy $\pi$. 

Suppose we are at a fixed time $t$. Due to the notational needs of information relaxation, let $s_{\tau,t}(s,\a,\w) \in \mathcal S$ be the deterministic state reached at time $\tau > t$ given that we are in state $s$ at time $t$, implement a fixed sequence of actions $\a = (a_t,a_{t+1}, \ldots a_{T-1})$, and observe a fixed sequence of exogenous outcomes $\w = (w_{t+1}, w_{t+2}, \ldots, w_T)$. For succinctness, the time subscripts have been dropped from the vector representations. Similarly, let $s_{\tau,t}(s,\pi,\w) \in \mathcal S$ be the deterministic state reached at time $\tau > t$ if we follow a fixed policy $\pi \in \Pi$.

Finally, we need to refer to the future contributions starting from time $t$, state $s$, and a sequence of exogenous outcomes $\w = (w_{t+1}, w_{t+2}, \ldots, w_T)$. For convenience, we slightly abuse notation and use two versions of this quantity, one using a fixed sequence of actions $\a = (a_t, a_{t+1}, \ldots, a_{T-1})$ and another using a fixed policy $\pi$:
\[
h_t(s,\a,\w)  = \sum_{\tau = t}^{T-1} c_\tau(s_{\tau,t}(s,\a,\w), a_\tau, w_{\tau+1}), \; \; h_t(s,\pi,\w)  = \sum_{\tau = t}^{T-1} c_\tau(s_{\tau,t}(s,\pi,\w), a_\tau, w_{\tau+1}).
\]
Therefore, if we define the random process $\W_{t+1,T} = (W_{t+1}, W_{t+2}, \ldots, W_T)$, then the quantities $h_t(s,\a,\W_{t+1,T})$ and $h_t(s,\pi,\W_{t+1,T})$ represent the \emph{random downstream cumulative reward} starting at state $s$ and time $t$, following a deterministic sequence of actions $\a$ or a policy $\pi$. For example, the objective function to the MDP given in (\ref{eq:mdpobj}) can be rewritten more concisely as $\max_{ \pi \in \Pi} \E \bigl[  h_t (s_0,\pi, \W_{1,T}) \bigr]$.

\subsection{Monte Carlo Tree Search}
\label{subsec:mcts}
The canonical MCTS algorithm iteratively grows and updates a decision tree, using the default policy as a guide towards promising subtrees. Because sequential systems evolve from a (pre-decision) state $S_t$, to an action $a_t$, to a post-decision state or a state-action pair $(S_t,a_t)$, to new information $W_{t+1}$, and finally, to another state $S_{t+1}$, there are two types of nodes in a decision tree: \emph{state nodes} (or ``pre-decision states'') and \emph{state-action nodes} (or ``post-decision states''). The layers of the tree are chronological and alternate between these two types of nodes. A child of a state node is a state-action node connected by an edge that represents a particular action. Similarly, a child of a state-action node is a state node for the next stage, where the edge represents an outcome of the exogenous information process $W_{t+1}$. 

Since we are working within the decision tree setting, it is necessary to introduce some additional notation that departs from the traditional MDP style. A state node is represented by an augmented state that contains the entire path down the tree from the root node $s_0$:
\[
\mathbf{x}_t = (s_0, a_0,s_1,a_1,s_2\ldots,a_{t-1},s_t) \in \mathcal X_t,
\]
where $a_0 \in \mathcal A_{s_0}, a_1 \in \mathcal A_{s_1}, \ldots, a_{t-1} \in \mathcal A_{s_{t-1}}$ and $s_1, s_2, \ldots, s_t \in \mathcal S$. Let $\mathcal X_t$ be the set of all possible $\x_t$ (representing all possible paths to states at time $t$).
A state-action node is represented via the notation $\mathbf{y}_t = (\mathbf{x}_t,a_t)$ where $a_t \in \mathcal A_{s_t}$.  Similarly, let $\mathcal Y_t$ be the set of all possible $\y_t$. We can take advantage of the Markovian property along with the fact that any node $\x_t$ or $\y_t$ contains information about $t$ to write (again, a slight abuse of notation)
\[
V^*(\x_t) = V_t^*(s_t) \quad \text{and} \quad Q^*(\mathbf{y}_t) = Q_t^*(\x_t,a_t) = Q_t^*(s_t,a_t).
\]
At iteration $n$ of MCTS, each state node $\x_t$ is associated with a value function approximation $\bar{V}^n(\x_t)$ and each state-action node $(\x_t,a_t)$ is associated with the state-action value function approximation $\bar{Q}^n(\x_t,a_t)$. Moreover, we use the following shorthand notation:
\[
\P(S_{t+1} = s_{t+1} \, | \, \y_t) = \P(S_{t+1} = s_{t+1} \, | \, \x_t,a_t) = \P(f(s_t,a_t,W_{t+1}) = s_{t+1}).
\]

There are four main phases in the MCTS algorithm: selection, expansion, simulation, and backpropagation \citep{Browne2012a}. Oftentimes, the first two phases are called the \emph{tree policy} because it traverses and expands the tree; it is in these two phases where we will introduce our new methodology. Let us now summarize the steps of MCTS while employing \emph{double progressive widening} (DPW) technique \citep{Coutoux2011} to control the branching at each level of the tree. As its name suggests, DPW means we slowly expand the branching factor of the tree, in both state nodes and state-action nodes. The following steps summarize the steps of MCTS at a particular iteration $n$.
\begin{itemize}
	\item[] \textbf{Selection.} We are given a \emph{selection policy}, which determines a path down the tree at each iteration. When no progressive widening is needed, the algorithm traverses the tree until it reaches a leaf node, i.e., an unexpanded state node, and proceeds to the simulation step. On the other hand, when progressive widening is needed, the traversal is performed until an \emph{expandable node}, i.e., one for which there exists a child that has not yet been added to the tree, is reached. This could be either a state node or a state-action node; the algorithm now proceeds to the expansion step.
	\item[] \textbf{Expansion.} We now utilize a given \emph{expansion policy} to decide which child to add to the tree. The simplest method, of course, is to add an action at random or add an exogenous state transition at random. Assuming that expansion of a state-action node always follows the expansion of a state node, we are now in a leaf state node.
	\item[] \textbf{Simulation.} The aforementioned \emph{default policy} is now used to generate a sample of the value function evaluated at the current state node. The estimate is constructed using a sample path of the exogenous information process. This step of MCTS is also called a \emph{rollout}.
	\item[] \textbf{Backpropagation.} The last step is to recursively update the values up the tree until the root node is reached: for state-action nodes, a weighted average is performed on the values of its child nodes to update $\bar{Q}_t^n(\x_t,a_t)$, and for state nodes, a combination of a weighted average and maximum of the values of its child nodes is taken to update $\bar{V}_t^n(\x_t)$. These operations correspond to a backup operator discussed in \cite{Coulom2007} that achieves good empirical performance. We now move on to the next iteration by starting once again with the selection step.
\end{itemize}
Once a pre-specified number of iterations have been run, the best action out of the root node is chosen for implementation. After landing in a new state in the real system, MCTS can be run again with the new state as the root node. A practical strategy is to use the relevant subtree from the previous run of MCTS to initialize the new process \citep{Bertsimas2014}.

\subsection{Information Relaxation Bounds}
We next review the information relaxation duality ideas from \cite{Brown2010}; see also \cite{Brown2011} and \cite{Brown2014}. Here, we adapt the results of \cite{Brown2010} to our setting, where we require the bounds to hold for arbitrary sub-problems of the MDP. Specifically, we state the theorems from the point of view of a specific time $t$ and initial state-action pair $(s,a)$. Also, we focus on the \emph{perfect information relaxation}, where one assumes full knowledge of the future in order to create upper bounds. In this case, we have
\[
V_t^*(s) \le \E \Bigl[ \max_{\a}  h_t(s,\a,\W_{1,T}) \Bigr],
\]
which means that the value achieved by the optimal policy starting from time $t$ is upper bounded by the value of the policy that selects actions using perfect information. As we described previously, the main idea of this approach is to relax nonanticipativity constraints to provide upper bounds. Because these bounds may be quite weak, they are subsequently strengthened by imposing penalties for usage of future information. To be more precisely, we would like to subtract away a penalty defined by a function $z_t$ so that the right-hand-side is decreased to: $\E \left[ \max_{\a} \bigl[  h_t(s,\a,\W_{t+1,T}) - z_t(s,\a,\W_{t+1,T}) \bigr] \right]$.

Consider the subproblem (or subtree) starting in stage $t$ and state $s$. A \emph{dual penalty} $z_{t}$ is a function that maps an initial state, a sequence of actions $\a = (a_t,a_{t+1},\ldots,a_{T-1})$, and a sequence of exogenous outcomes $\w = (w_{t+1}, \ldots, w_{T})$ to a penalty $z_{t}(s,\a, \w) \in \mathbb R$. As we did in the definition of $h_t$, the same quantity is written $z_{t}(s,\pi,\w)$ when the sequence of actions is generated by a policy $\pi$. The set of \emph{dual feasible penalties} for a given initial state $s$ are those $z_t$ that do not penalize admissible policies; it is given by the set
\begin{equation}
\mathcal Z_t(s) = \bigl \{ z_t :   \mathbf{E} \bigl[  z_{t}(s, \pi, \W_{t+1,T} )\bigr] \le 0 \; \; \forall \, \pi \in \Pi  \bigr\},
\label{eq:dualdef}
\end{equation}
where $\W_{t+1,T} = (W_{t+1}, \ldots, W_{T})$.
Therefore, the only ``primal'' policies (i.e., policies for the original MDP) for which a dual feasible penalty $z$ could assign positive penalty in expectation are those that are not in $\Pi$. 

We now state a theorem from \cite{Brown2010} that illuminates the dual bound method. The intuition is best described from a simulation point of view: we sample an entire future trajectory of the exogenous information $\W_{t+1,T}$ and using full knowledge of this information, the optimal actions are computed. It is clear that after taking the average of many such trajectories, the corresponding averaged objective value will be an upper bound on the value of the optimal (nonanticipative) policy. The dual penalty is simply a way to improve this upper bound by \emph{penalizing the use of future information}; the only property required in the proof of Theorem \ref{thm:weak} is the definition of dual feasibility. The proof is simple and we repeat it here so that we can state a small extension later in the paper (in Proposition \ref{prop:upperbound}). The right-hand-side of the inequality below is a penalized perfect information relaxation.

\begin{theorem}[Weak Duality, \cite{Brown2010}]
Fix a stage $t \in \mathcal T$ and initial state $s \in \mathcal S$. Let $\pi \in \Pi$ be a feasible policy and $z_t \in \mathcal Z_t(s)$ be a dual feasible penalty, as defined in (\ref{eq:dualdef}). It holds that
\begin{equation}
V_t^\pi(s) \le \E \left[ \max_{\a} \bigl[  h_t(s,\a,\W_{t+1,T}) - z_t(s,\a,\W_{t+1,T}) \bigr] \right],
\label{eq:weak}
\end{equation}
where $\a = (a_t, \ldots, a_{T-1})$.
\label{thm:weak}
\end{theorem}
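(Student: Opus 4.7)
The plan is to reduce the inequality to a pointwise argument on each sample path of $\W_{t+1,T}$, exploiting dual feasibility to pay for the $-z_t$ term. First, I would unroll the Bellman recursion for $V^\pi_t$ in (\ref{eq:bellman1}) to get the compact expression $V_t^\pi(s) = \E[h_t(s,\pi,\W_{t+1,T})]$. Concretely, induction on $t$ going backwards from $T$ (where $V^\pi_T \equiv 0$ agrees with the empty sum) together with the tower property and independence of the $W_\tau$'s yields this identity.

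Next, I would use that $z_t \in \mathcal Z_t(s)$ to add a nonnegative slack. By the definition (\ref{eq:dualdef}), $\E[z_t(s,\pi,\W_{t+1,T})] \le 0$, so
\[
V_t^\pi(s) \;=\; \E[h_t(s,\pi,\W_{t+1,T})] \;\le\; \E\bigl[h_t(s,\pi,\W_{t+1,T}) - z_t(s,\pi,\W_{t+1,T})\bigr].
\]

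The third step is pointwise domination. For each realization $\w = (w_{t+1},\ldots,w_T)$, the policy $\pi$ together with the deterministic transitions $s_{\tau,t}(s,\pi,\w)$ induces a specific action sequence $\a^\pi(\w) = (\pi_t(s_t),\pi_{t+1}(s_{t+1,t}(s,\pi,\w)),\ldots) \in \prod_\tau \mathcal A$. Because $h_t$ and $z_t$ evaluated along $\pi$ are by construction the same as $h_t$ and $z_t$ evaluated along $\a^\pi(\w)$, we have the pointwise bound
\[
h_t(s,\pi,\w) - z_t(s,\pi,\w) \;=\; h_t(s,\a^\pi(\w),\w) - z_t(s,\a^\pi(\w),\w) \;\le\; \max_{\a}\bigl[h_t(s,\a,\w) - z_t(s,\a,\w)\bigr].
\]
Taking expectations over $\W_{t+1,T}$ and combining with the previous display gives (\ref{eq:weak}).

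There is essentially no hard step — the whole argument is a one-line chain once the notational correspondence between ``policy-driven'' and ``action-sequence-driven'' versions of $h_t$ and $z_t$ is made explicit. The only mildly delicate point is verifying that the $\max$ on the right-hand side is measurable and integrable so that the pointwise inequality lifts to expectations; this is automatic here because $\mathcal A$ and $\mathcal W$ are finite and $c_t$ is bounded, so the inner maximization is over a finite set of bounded random variables. I would flag this briefly but not dwell on it, as it is the same observation that justifies the finiteness of the perfect-information upper bound in the preceding discussion.
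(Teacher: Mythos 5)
Your argument is correct and follows the same route as the paper's own proof: express $V_t^\pi(s)$ as $\E[h_t(s,\pi,\W_{t+1,T})]$, subtract the dual penalty using feasibility, and then bound the policy-induced action sequence pointwise by the inner maximum. The only difference is that you spell out the pointwise-domination and measurability details that the paper leaves implicit, which is fine but not a different proof.
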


\begin{proof}
By definition, $V_t^\pi(s) = \E \bigl[  h_t(s,\pi,\W_{t+1,T}) \bigr]$. Thus, it follows by dual feasibility that
\begin{align*}
V_t^\pi(s_t) &\le \E \bigl[  h_t(s,\pi,\W_{t+1,T}) - z_t(s,\pi,\W_{t+1,T})  \bigr]\\
&\le \E \Bigl[ \max_{\a} \bigl[  h_t(s,\a,\W_{t+1,T})  - z_t(s,\a,\W_{t+1,T}) \bigr] \Bigr].
\end{align*}
The second inequality follows by the property that a policy using future information must achieve higher value than an admissible policy. In other words, $\Pi$ is contained within the set of policies that are not constrained by nonanticipativity.
\end{proof}

Note that the left-hand-side of (\ref{eq:weak}) is known as the \emph{primal problem} and the right-hand-side is the \emph{dual problem}, so it is easy to see that the theorem is analogous to classical duality results from linear programming. The next step, of course, is to identify some dual feasible penalties. For each $t$, let $\nu_t: \mathcal S \rightarrow \mathbb R$ be any function and define
\begin{equation}
\bar{\nu}_\tau(s,\a,\w) = \nu_{\tau+1}(s_{\tau+1}(t,s,\a,\w)) - \E \,  \nu_{\tau+1} (f(s_\tau(t,s,\a,\w),a_\tau,W_{\tau+1})).
\label{eq:vfa}
\end{equation}
 \cite{Brown2010} suggests the following additive form for a dual penalty:
\begin{equation}
z_t^\nu(s,\a,\w) = \sum_{\tau = t}^{T-1} \bar{\nu}_{\tau}(s,\a,\w),
\label{eq:vfa2}
\end{equation}
 and it is shown in the paper that this form is indeed dual feasible. We refer to this as the \emph{dual penalty generated by $\nu = \{\nu_t\}$}. The \emph{standard dual upper bound} is accomplished without penalizing, i.e., by setting $\nu_t \equiv 0$ for all $t$. As we will show in our empirical results on the ride-sharing model, this upper bound is simple to implement and may be quite effective. 

 However, in situations where the standard dual upper bound is too weak, a good choice of $\nu$ can generate tighter bounds. It is shown that if the optimal value function $V_\tau^*$ is used in place of $\nu_\tau$ in (\ref{eq:vfa}), then the best upper bound is obtained. In particular, a form of \emph{strong duality} holds: when Theorem \ref{thm:weak} is invoked using the optimal policy $\pi^* \in \Pi$ and $\nu_\tau = V_\tau^*$, the inequality (\ref{eq:weak}) is achieved with equality. The interpretation of $\nu_\tau = V_\tau^*$ is that $d_\tau^\nu$ can be thought of informally as the ``value gained from knowing the future.'' Thus, the intuition behind this result is as follows: if one knows precisely how much can be gained by using future information, then a perfect penalty can be constructed so as to recover the optimal value of the primal problem.

However, strong duality is hard to exploit in practical settings, given that both sides of the equation require knowledge of the optimal policy. Instead, a viable strategy is to use approximate value functions $\bar{V}_\tau$ on the right-hand-side of (\ref{eq:vfa}) in order to obtain ``good'' upper bounds on the optimal value function $V_t^*$ on the left-hand-side of (\ref{eq:weak}).   This is where we can potentially take advantage of the default policy of MCTS to improve upon the standard dual upper bound; the value function associated with this policy can be used to generate a dual feasible penalty. We now state a specialization of Theorem \ref{thm:weak} that is useful for our MCTS setting.


%

\begin{proposition}[State-Action Duality]
Fix a stage $t \in \mathcal T$ and an initial state-action pair $(s,a) \in \mathcal S \times \mathcal A$. Assume that the dual penalty function takes the form given in (\ref{eq:vfa})--(\ref{eq:vfa2}) for some $\nu = \{\nu_t\}$. Then, it holds that
\begin{equation}
Q_t^*(s,a) \le \E \Bigl[ \, c_t(s,a,W_{t+1}) +  \max_{\a} \bigl[ h_{t+1}(S_{t+1},\a,\W_{t+1,T}) - z^\nu_{t+1}(S_{t+1},\a,\W_{t+1,T}) \bigr] \Bigr],
\label{eq:upperbound}
\end{equation}
where $S_{t+1} = f(s,a,W_{t+1})$ and the optimization is over the vector $\a = (a_{t+1},\ldots, a_{T-1})$.
\label{prop:upperbound}
\end{proposition}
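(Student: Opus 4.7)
The plan is to reduce the statement to Theorem~\ref{thm:weak} by conditioning on the first exogenous outcome $W_{t+1}$ and applying weak duality to each realized post-transition state. The key observation is that $Q_t^*(s,a)$ is, by one step of the Bellman recursion, an expectation over $W_{t+1}$ of the optimal value at the next stage, and on each such realization weak duality gives a penalized perfect-information upper bound using any dual feasible penalty.

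Concretely, I would first invoke the identity
\[
Q_t^*(s,a) = \E\bigl[\,c_t(s,a,W_{t+1}) + V_{t+1}^*(S_{t+1})\bigr], \qquad S_{t+1} = f(s,a,W_{t+1}),
\]
which follows directly from the definitions in Section~\ref{subsec:model}. Next, for any fixed realization $s_{t+1}$ of $S_{t+1}$, I would note that $V_{t+1}^*(s_{t+1}) = V_{t+1}^{\pi^*}(s_{t+1})$ for the optimal policy $\pi^* \in \Pi$, and that $z^\nu_{t+1}$ is dual feasible (this is precisely the additive form (\ref{eq:vfa})--(\ref{eq:vfa2}) for which \cite{Brown2010} establishes membership in $\mathcal Z_{t+1}(s_{t+1})$). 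Applying Theorem~\ref{thm:weak} with $t$ replaced by $t+1$, $s$ replaced by $s_{t+1}$, policy $\pi^*$, and penalty $z^\nu_{t+1}$ therefore yields
\[
V_{t+1}^*(s_{t+1}) \le \E\Bigl[\max_{\a}\bigl[h_{t+1}(s_{t+1},\a,\W_{t+2,T}) - z^\nu_{t+1}(s_{t+1},\a,\W_{t+2,T})\bigr]\Bigr]
\]
for every $s_{t+1}$ in the support of $S_{t+1}$.

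I would then substitute this bound back inside the outer expectation over $W_{t+1}$. Using that $W_{t+1}$ is independent of $(W_{t+2},\ldots,W_T)$ (by the time-independence assumption on the exogenous process), the tower property lets me merge the two nested expectations into the single expectation in (\ref{eq:upperbound}), with $S_{t+1} = f(s,a,W_{t+1})$ now appearing as a random argument of the inner maximization. Matching against the proposition's notation (where $\W_{t+1,T}$ is written in place of the pair $(W_{t+1};\W_{t+2,T})$ since $h_{t+1}$ and $z^\nu_{t+1}$ only depend on components from index $t+2$ onward) completes the inequality.

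The only delicate point is the bookkeeping in the last step: one must be careful that the $W_{t+1}$ used to transition into $S_{t+1}$ is not also one of the variables being optimized over in the inner maximum, and that the dual feasibility constant on the inner expectation holds for every realized $s_{t+1}$, not merely in expectation. Both issues are handled by the independence of $\{W_t\}$ and by the fact that the dual feasibility of $z^\nu_{t+1}$ established in \cite{Brown2010} is a pointwise-in-state statement, so no additional measurability or uniformity argument is required.
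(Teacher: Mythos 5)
Your proof is correct, but it takes a genuinely different route from the paper's. The paper's argument constructs the modified policy $\tilde{\pi} = (a, \pi_{t+1}^*, \ldots, \pi_{T-1}^*)$, identifies $Q_t^*(s,a)$ with the value of that policy from $(t,s)$, and reruns the weak-duality chain of Theorem \ref{thm:weak} over the whole horizon $[t,T]$ with the first action constrained to $a$; the separability of $z_t^\nu$ in (\ref{eq:vfa2}) is then what lets one peel off the stage-$t$ penalty term $\bar{\nu}_t$, whose expectation vanishes once $a_t = a$ is fixed, leaving exactly $z_{t+1}^\nu$ inside the inner maximization. You instead decompose via one step of the Bellman recursion, apply Theorem \ref{thm:weak} as a black box at stage $t+1$ pointwise in each realized state $s_{t+1}$ (using $V_{t+1}^* = V_{t+1}^{\pi^*}$ and the dual feasibility of $z_{t+1}^\nu$ at every initial state), and then merge the nested expectations using the independence of $W_{t+1}$ from $\W_{t+2,T}$. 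Your version is more modular: it never touches the stage-$t$ component of the penalty, so separability enters only through the cited dual feasibility of $z_{t+1}^\nu$, at the cost of the conditioning and tower-property bookkeeping, which you handle correctly. Your reading of the proposition's notation --- that $\W_{t+1,T}$ in the arguments of $h_{t+1}$ and $z_{t+1}^\nu$ is a harmless superset of the components actually used --- is also the right one.
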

\begin{proof}
Choose a policy $\tilde{\pi}$ (restricted to stage $t$ onwards) such that the first decision function maps to $a$ and the remaining decision functions match those of the optimal policy $\pi^*$: 
\[
\tilde{\pi} = (a, \pi_{t+1}^*, \pi_{t+2}^*, \ldots, \pi_{T-1}^*).
\]
Using this policy and the separability of $z^\nu_t$ given in (\ref{eq:vfa2}), an argument analogous to the proof of Theorem \ref{thm:weak} can be used to obtain the result.
\end{proof}
For convenience, let us denote the dual upper bound generated using the functions $\nu$ by
\begin{equation*}
u_t^\nu(s,a) = \E \Bigl[ \, c_t(s,a,W_{t+1}) +  \max_{\a} \bigl[ h_{t+1}(S_{t+1},\a,\W_{t+1,T}) - z_{t+1}^\nu(S_{t+1},\a,\W_{t+1,T}) \bigr] \Bigr].
\end{equation*}
Therefore, the dual bound can be simply stated as $Q_t^*(s,a) \le u_t^\nu(s,a)$. For a state-action node $\y_t = (s_0,a_0,\ldots,s_t,a_t)$ in the decision tree, we use the notation $u^\nu(\y_t) = u_t^\nu(s,a)$. The proposed algorithm will keep estimates of the upper bound on the right-hand-side of (\ref{eq:upperbound}) in order to make tree expansion decisions. As the algorithm progresses, the estimates of the upper bound are refined using a stochastic gradient method.


\section{Primal-Dual MCTS Algorithm}
\label{sec:alg}
In this section, we formally describe the proposed Primal-Dual MCTS algorithm. The core of the algorithm is MCTS with double progressive widening \citep{Coutoux2011}, except in our case, the dual bounds generated by the functions $\nu_t$ play a specific role in the expansion step. Let $\mathcal X = \cup_{t}  \, \mathcal X_t$ be the set of all possible state nodes and let $\mathcal Y = \cup_t \, \mathcal Y_t$ be the set of all possible state-action nodes. At any iteration $n \ge 0$, our tree $\mathscr T^n = (n, \mathcal X^n, \mathcal Y^n, \bar{V}^n, \bar{Q}^n, \bar{u}^n, v^n,l^n)$ is described by the set $\mathcal X^n \subseteq \mathcal X$ of expanded state nodes, the set $\mathcal Y^n \subseteq \mathcal Y$ of expanded state-action nodes, the value function approximations $\bar{V}^n : \mathcal X \rightarrow \mathbb R$ and $\bar{Q}^n : \mathcal Y \rightarrow \mathbb R$, the estimated upper bounds $\bar{u}^n : \mathcal Y \rightarrow \mathbb R$, the number of visits $v^n : \mathcal X \, \cup \, \mathcal Y \rightarrow \mathbb R$ to expanded nodes, and the number of information relaxation upper bounds, or ``lookaheads,'' $l^n : \mathcal Y \rightarrow \mathbb R$ performed on unexpanded nodes. The terminology ``lookahead'' is used to mean a stochastic evaluation of the dual upper bound given in Proposition \ref{prop:upperbound}. In other words, we ``lookahead'' into the future and then exploit this information (thereby relaxing nonanticipativity) to produce an upper bound.

The root node of $\mathscr T^n$, for all $n$, is $\x_0 = s_0$. Recall that any node contains full information regarding the path from the initial state $\mathbf{x}_0 = s_0$. Therefore, in this paper, the edges of the tree are implied and we do not need to explicitly refer to them; however, we will use the following notation. For a state node $\x \in \mathcal X^n$, let $\mathcal Y^n(\x)$ be the child state-action nodes (i.e., already expanded nodes) of $\x$ at iteration $n$ (dependence on $\mathscr T^n$ is suppressed) and $\tilde{\mathcal Y}^n(\x)$ be the unexpanded state-action nodes of $\x$:
\[
\mathcal Y^n(\x) = \{ (\x,a') :  a' \in \mathcal A_\x,\, (\x,a') \in \mathcal Y^n  \},\quad \tilde{\mathcal Y}^n(\x) = \{ (\x,a') :  a' \in \mathcal A_\x,\, (\x,a') \not \in \mathcal Y^n(\x)  \}.
\]
Furthermore, we write $\tilde{\mathcal Y}^n = \cup_{\x \in \mathcal X^n} \tilde{\mathcal Y}^n(\x)$.

Similarly, for $\mathbf{y} = (s_0,a_0,\ldots,s_t,a_t) \in \mathcal Y^n$, let $\mathcal X^n(\mathbf{y})$ be the child state nodes of $\mathbf{y}$ and $\tilde{\mathcal X}^n(\y)$ be the unexpanded state nodes of $\y$:
\[
\mathcal X^n(\y) = \{ (\y,s) :  s \in \mathcal S,\, (\y,s) \in \mathcal X^n  \}, \quad \tilde{\mathcal X}^n(\y) = \{ (\y,s) :  s \in \mathcal S,\, (\y,s) \not \in \mathcal X^n(\y)\}.
\]

For mathematical convenience, we have $\bar{V}^0$, $\bar{Q}^0$, $\bar{u}^0$, $v^0$, and $l^0$ taking the value zero for all elements of their respective domains. For each $\x \in \mathcal X^n$ and $\y \in \mathcal Y^n$, let $\bar{V}^n(\x)$ and $\bar{Q}^n(\y)$ represent the estimates of $V^*(\x)$ and $Q^*(\y)$, respectively. Note that although $\bar{V}^n(\x)$ is defined (and equals zero) prior to the expansion of $\x$, it does not gain meaning until $\x \in \mathcal X^n$. The same holds for the other quantities.

Each unexpanded state node $\y^\text{u} \in \tilde{\mathcal Y}^n$ is associated with an estimated dual upper bound $\bar{u}^n(\y^\text{u})$. A state node $\x$ is called \emph{expandable} on iteration $n$ if $\tilde{\mathcal Y}^n(\x)$ is nonempty. Similarly, a state-action node $\y$ is \emph{expandable} on iteration $n$ if $\tilde{\mathcal X}^n(\y)$ is nonempty. In addition, let $v^n(\x)$ and $v^n(\y)$ count the number of times that $\x$ and $\y$ are visited by the selection policy (so $v^n$ becomes positive after expansion). The tally $l^n(\y)$ counts the number of dual lookaheads performed at each unexpanded state. We also need stepsizes $\alpha^n(\x)$ and $\alpha^n(\y)$ to track the estimates $\bar{V}^n(\x)$ generated by $\pid$ for leaf nodes $\x$ and $\bar{u}^n(\y)$ for leaf nodes $\y \in \tilde{\mathcal Y}^n$.


Lastly, we define two sets of \emph{progressive widening iterations}, $\mathcal N_{x} \subseteq \{0,1,2,\ldots\}$ and $\mathcal N_{y} \subseteq \{0,1,2,\ldots\}$.  When $v^n(\x) \in \mathcal N_x$, we consider expanding the state node $\x$ (i.e., adding a new state-action node stemming from $\x$), and when $v^n(\y) \in \mathcal N_y$, we consider expanding the state-action node $\y$ (i.e., adding a downstream state node stemming from $\y$).

\subsection{Selection} Let $\pis$ be a \emph{selection policy} that steers the algorithm down the \emph{current version} of the decision tree. It is independent from the rest of the system and depends only on the current state of the decision tree. We use the same notation for both types of nodes: for $\x \in \mathcal X^{n-1}$ and $\y \in \mathcal Y^{n-1}$, we have 
\[
\pi^{\textnormal{s}}(\x, \mathscr T^{n-1}) \in \mathcal Y^{n-1}(\x) \quad \text{and} \quad \pi^{\textnormal{s}}(\y, \mathscr T^{n-1}) \in \mathcal X^{n-1}(\y).
\]
Let us emphasize that $\pis$ contains no logic for expanding the tree and simply provides a path down the partial tree $\mathscr T^n$. The most popular MCTS implementations \citep{Chang2005,Kocsis2006} use the UCB1 policy \citep{Auer2002} for $\pis$ when acting on state nodes. The UCB1 policy balances exploration and exploitation by selecting the state-action node $\y$ by solving
\begin{equation}
\pis(\x,\mathscr T^{n-1})  \in \argmax_{\y \in \mathcal Y^{n-1}(\x)} \, \bar{Q}^{n-1}(\y) + \sqrt{\frac{2 \ln \sum_{\y' \in \mathcal Y^{n-1}(\x)} v^{n-1}(\y') }{ v^{n-1}(\y)}}.
\label{eq:UCB1}
\end{equation}
The second term is an ``exploration bonus'' which decreases as nodes are visited.
Other multi-armed bandit policies may also be used; for example, we may instead prefer to implement an $\epsilon$-greedy policy where we exploit with probability $1-\epsilon$ and explore with probability (w.p.) $\epsilon$:
\[
\pis(\x,\mathscr T^{n-1}) = \begin{cases} \argmax_{\y \in \mathcal Y^{n-1}(\x)} \, \bar{Q}(\y) & \text{w.p.} \quad 1-\epsilon,\\
\text{a random element from $\mathcal Y^{n-1}(\x)$} & \text{w.p.} \quad \epsilon.
\end{cases}
\]

When acting on state-action nodes, $\pis$ selects a downstream state node; for example, given $\y_t = (s_0,a_0,\ldots,s_t,a_t)$, the selection policy $\pis(\y_t,\mathscr T^{n-1})$ may select $\x_{t+1} = (s_0,a_1,\ldots,s_{t+1}) \in \mathcal X^{n-1}(\y_t)$ with probability $\P(S_{t+1} = s_{t+1} \, | \, \y_t)$, normalized by the total probability of reaching expanded nodes $\mathcal X^n(\y_t)$. We require the condition that once all downstream states are expanded, the sampling probabilities match the transition probabilities of the original MDP. We now summarize the \emph{selection phase} of Primal-Dual MCTS.
\begin{itemize}
\item Start at the root node and descend the tree using the selection policy $\pis$ until one of the following is reached: Condition (S1), an expandable state node $\x$ with $v^n(\x) \in \mathcal N_x$; Condition (S2), an expandable state-action node $\y$ with $v^n(\y) \in \mathcal N_y$; or Condition (S3), a leaf state node $\x$ is reached.
\item If the selection policy ends with conditions (S1) or (S2), then we move on to the \emph{expansion step}. Otherwise, we move on to the \emph{simulation and backpropagation steps}.
\end{itemize}

\subsection{Expansion}
\textbf{Case 1:} First, suppose that on iteration $n$, the selection phase of the algorithm returns $\x_{\taue}^n = (s_0,a_0,\ldots,s_{\taue})$ to be expanded, for some $\taue \in \mathcal T$. Due to the possibly large set of unexpanded actions, we first sample a subset of candidate actions (e.g., a set of $k$ actions selected uniformly at random from those in $\mathcal A$ that have not been expanded). Application specific heuristics may be employed when sampling the set of candidates. Then, for each candidate, we perform a lookahead to obtain an estimate of the perfect information relaxation dual upper bound. The lookahead is evaluated by solving a deterministic optimization problem on one sample path of the random process $\{W_t\}$. In the most general case, this is a deterministic dynamic program. However, other formulations may be more natural and/or easier to solve for some applications. If the contribution function is linear, the deterministic problem could be as simple as a linear program (for example, the asset acquisition problem class described in \cite{Nascimento2009a}). See also \cite{Al-Kanj2016} for an example where the information relaxation is a mixed-integer linear program. The resulting stochastic upper bound is then smoothed with the previous estimate via the stepsize $\alpha^n(\x_\taue^n)$. We select the action with the highest upper bound to expand, but only if the upper bound is larger than the current best value function $\bar{Q}^n$. Otherwise, we skip the expansion step because our estimates tell us that none of the candidate actions are optimal. The following steps comprise of the \emph{expansion phase} of Primal-Dual MCTS for a state node $\x_{\taue}^n$.
\begin{itemize}
\item Sample a subset of candidate actions according to a pre-specified sampling policy $\pia(\x_\taue^n, \mathscr T^{n-1}) \subseteq \mathcal A_{\x_\taue^n}$ and consider those actions that are unexpanded:
\[
\Atilde^n(\x_\taue^n) = \pia(\x_\taue^n, \mathscr T^{n-1})  \cap \{ a \in \mathcal A_{\x_\taue^n}: (\x_\taue^n,a) \in \tilde{\mathcal Y}^n(\x_\taue^n)  \}. 
\]
\item Obtain a single sample path $\W_{\taue+1,T}^n = (W_{\taue+1}^n,\ldots W_T^n)$ of the exogenous information process. For each candidate action $a \in \tilde{\mathcal A}^n(\x_\taue^n)$, compute the optimal value of the deterministic optimization ``inner'' problem of (\ref{eq:upperbound}):
\begin{align*}
\hat{u}^n(&\x_\taue^n,a) = \\
&c_\taue(s,a,W^n_{\taue+1}) +  \max_{\a} \bigl[ h_{\taue+1}\bigl(S_{\taue+1},\a,\W^n_{\taue+1,T}\bigr) - z_{\taue+1}^{\nu}\bigl(S_{\taue+1},\a,\W^n_{\taue+1,T}\bigr) \bigr].
\end{align*}
\item For each candidate action $a \in \tilde{\mathcal A}^n(\x_\taue^n)$, smooth the newest observation of the upper bound with the previous estimate via a stochastic gradient step:
\begin{equation}
\bar{u}^n(\x_\taue^n,a) = (1-\alpha^n(\x_\taue^n,a))\, \bar{u}^{n-1}(\x_\taue^n,a) + \alpha^n(\x_\taue^n,a) \, \hat{u}^n(\x_\taue^n,a).
\label{eq:duallookahead}
\end{equation}
State-action nodes $\y$ elsewhere in the tree that are not considered for expansion retain the same upper bound estimates, i.e., $\bar{u}^n(\y) = \bar{u}^{n-1}(\y)$.
\item Let $a^{n} = \argmax_{a \in \tilde{\mathcal A}^n(\x_\taue^n)} \bar{u}^n(\x_\taue^n,a )$ be the candidate action with the best dual upper bound. If no candidate is better than the current best, i.e., $\bar{u}^n(\x_\taue^n, a^n) \le \bar{V}^{n-1}(\x_{\taue}^n)$, then we skip this potential expansion and return to the \emph{selection phase} to continue down the tree.
\item Otherwise, if the candidate is better than the current best, i.e., $\bar{u}^n(\x_\taue^n, a^n) > \bar{V}^{n-1}(\x_{\taue}^n)$, then we \emph{expand} action $a^n$ by adding the node $\y_\taue^n = (\x_\taue^n,a^n)$ as a child of $\x_\taue^n$. We then immediately sample a downstream state $\x_{\taue+1}^n$ using $\pis$ from the set $\tilde{\mathcal X}^n(\y_\taue^n)$ and add it as a child of $\y_\taue^n$ (every state-action expansion triggers a state expansion). After doing so, we are ready to move on to the \emph{simulation and backpropagation phase} from the leaf node $\x_{\taue+1}^n$.
\end{itemize}

\noindent \textbf{Case 2:} Now suppose that we entered the \emph{expansion phase} via a state-action node $\y_\taue^n$. In this case, we simply sample a single state $\x_{\taue+1}^n = (\y_\taue^n, s_{\taue+1})$ from $\tilde{\mathcal X}^n(\y_\taue^n)$ such that
\[
\P(S_{\taue+1} = s_{\taue+1} \, | \, \y_\taue^n) > 0
\] and add it as a child of $\y_\taue^n$. Next, we continue to the \emph{simulation and backpropagation phase} from the leaf node $\x_{\taue+1}^n$.

\begin{figure}[h]
   \includegraphics[width=1.05\textwidth]{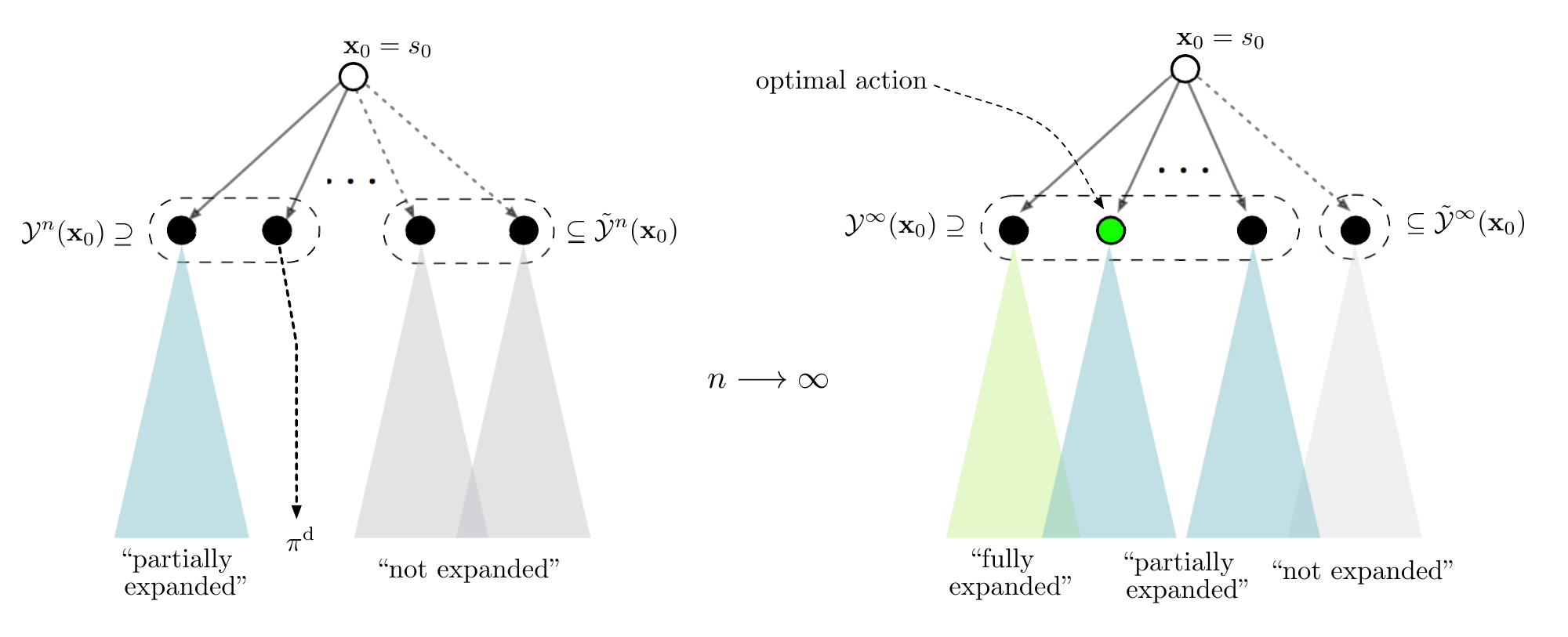}
    \vspace{-3pt}
    \caption{Properties of the Primal-Dual MCTS Algorithm}
    \label{fig:trees}
\end{figure}

\subsection{Simulation and Backpropagation}
\label{sec:simulationback}
We are now at a leaf node $\x_{\taus}^n = (s_0,a_0,\ldots,s_\taus)$, for some $\taus \le T$. At this point, we cannot descend further into the tree so we proceed to the \emph{simulation and backpropagation phase}. The last two steps of the algorithm are relatively simple: first, we run the default policy to produce an estimate of the leaf node's value and then update the values ``up'' the tree via equations resembling (\ref{eq:bellman1}) and (\ref{eq:bellman2}). The steps are as follows.
\begin{itemize}
\item  Obtain a single sample path $\W_{\taus+1,T}^n = (W_{\taus+1}^n,\ldots W_T^n)$ of the exogenous information process and using the default policy $\pid$, compute the value estimate
\begin{equation}
\hat{V}^n(\x_\taus^n) = h_\taus(s_\taus,\pid,\W_{\taus+1,T}^n) \, \mathbf{1}_{\{\taus<T\}}.
\label{eq:simobs}
\end{equation}
If $\taus = T$, then the value estimate is simply the terminal value of zero. The value of the leaf node is updated by taking a stochastic gradient step that smooths the new observation with previous observations according to the equation
\begin{equation*}
\bar{V}^n(\x_\taus^n) = \left( 1 - \alpha^n(\x_\taus^n) \right) \, \bar{V}^{n-1}(\x_\taus^n)  + \alpha^n(\x_\taus^n) \, \hat{V}^n(\x_\taus^n).
\label{eq:simulation}
\end{equation*}
\item After simulation, we backpropagate the information up the tree. Working backwards from the leaf node, we can extract a ``path,'' or a sequence of state and state-action nodes $\x_\taus^n, \y_{\taus-1}^n, \ldots, \x_1^n, \y_0^n, \x_0^n$ (each of these elements is a ``subsequence'' of the vector $\x_\taus^n = (s_0, a_0^n, \ldots, s_\taus^n)$, starting with $s_0$). For $t = \taus-1, \taus-2, \ldots, 0$, the backpropagation equations are:
\begin{align}
\bar{Q}^n(\y_t^n) &= \bar{Q}^{n-1}(\y_t^n) + \frac{1}{v^n(\y_{t}^n) } \bigl[\bar{V}^n(\x^n_{t+1}) - \bar{Q}^n(\y_t^n)\bigr],\label{eq:backprop1}\\
\tilde{V}^{n}(\x_t^n) &= \tilde{V}^{n-1}(\x_t^n) + \frac{1}{v^n(\x_{t}^n) } \bigl[\bar{Q}^n(\y_t^n) - \tilde{V}^{n}(\x_t^n)\bigr],\label{eq:backprop2}\\
\bar{V}^n(\x_t^n) &=  (1-\lambda^n) \, \tilde{V}^{n}(\x_t^n) + \lambda^n \max_{\y_t} \bar{Q}^n(\y_{t}),\label{eq:backprop3}
\end{align}
where $\y_t \in \mathcal Y^n(\x_t^n)$ and $\lambda^n \in [0,1]$ is a mixture parameter. Nodes $\x$ and $\y$ that are not part of the path down the tree retain their values, i.e.,
\begin{equation}
\bar{V}^n(\x) = \bar{V}^{n-1}(\x) \quad \text{and} \quad \bar{Q}^n(\y) = \bar{Q}^{n-1}(\y).
\label{eq:backprop4}
\end{equation}
\end{itemize}
The first update (\ref{eq:backprop1}) maintains the estimates of the state-action value function as weighted averages of child node values. The second update (\ref{eq:backprop2}) similarly performs a recursive averaging scheme for the state nodes and finally, the third update (\ref{eq:backprop3}) sets the value of a state node to be a mixture between the weighted average of its child state-action node values and the maximum value of its child state-action nodes.

The naive update for $\bar{V}^n$ is to simply take the maximum over the state-action nodes (i.e., following the Bellman equation), removing the need to track $\tilde{V}^n$. Empirical evidence from \cite{Coulom2007}, however, shows that this type of update can create instability; furthermore, the authors state that ``the mean operator is more accurate when the number of simulations is low, and the max operator is more accurate when the number of simulations is high.'' Taking this recommendation, we impose the property that $\lambda^n \nearrow 1$ so that asymptotically we achieve the Bellman update yet allow for the averaging scheme to create stability in the earlier iterations. The update (\ref{eq:backprop3}) is similar to ``mix'' backup suggested by \cite{Coulom2007} which achieves superior empirical performance. 

The end of the simulation and backpropagation phase marks the conclusion of one iteration of the Primal-Dual MCTS algorithm. We now return to the root node and begin a new selection phase. Algorithm \ref{alg:pd} gives a concise summary of Primal-Dual MCTS. Moreover, Figure \ref{fig:trees} illustrates some aspects of the algorithm and emphasizes two key properties:
\begin{itemize}
\item The utilization of dual bounds allows entire subtrees to be ignored (even in the limit), thereby providing potentially significant computational savings.
\item The optimal action at the root node can be found without its subtree necessarily being fully expanded.
\end{itemize}
We will analyze these properties in the next section, but we first present an example that illustrates in detail the steps taken during the expansion phase.

\IncMargin{1em}
\begin{algorithm}

  \SetKwInput{Input}{Input}
  \SetKwInput{Output}{Output}
  \DontPrintSemicolon
\Indm  
  \Input{An initial state node $\x_0$, a default policy $\pid$, a selection policy $\pis$, a candidate sampling policy $\pia$, a stepsize rule $\{\alpha^n\}$, a backpropagation mixture scheme $\{\lambda^n\}$.}
  \BlankLine
  \Output{Partial decision trees $\{\mathscr T^n\}$.}
\Indp
  \BlankLine
  \For{$n = 1, 2, \ldots$}{
  \BlankLine
  \nl  run \texttt{Selection} phase with policy $\pis$ from $\x_0$ and return either condition (S1) with $\x_{\taue}^n$, (S2) with $\y_{\taue}^n$, or (S3) with $\x_{\taus}^n$.\;
  \BlankLine
    \uIf{condition \textnormal{(S1)}}{
      \BlankLine
  \nl			run \texttt{Case 1} of \texttt{Expansion} phase with policy $\pia$ at state node $\x_{\taue}^n$ and return leaf node $\x_{\taus}^n = \x_{\taue+1}^n.$
    \BlankLine

  		}
  		\ElseIf{condition \textnormal{(S2)}}{
  		  \BlankLine

  \nl			run \texttt{Case 2} of \texttt{Expansion} phase at state-action node $\y_{\taue}^n$ and return leaf node $\x_{\taus}^n = \x_{\taue+1}^n$.
    \BlankLine

  		}

  \BlankLine
  \nl run \texttt{Simulation and Backpropagation} phase from leaf node $\x_{\taus}^n$.
  \BlankLine

  }
    \caption{Primal-Dual Monte Carlo Tree Search}
    \label{alg:pd}
\end{algorithm}
\DecMargin{1em}

\begin{example}[Shortest Path with Random Edge Costs]
In this example, we consider applying the Primal-Dual MCTS to a shortest path problem with random edge costs (note that the algorithm is stated for maximization while shortest path is a minimization problem). The graph used for this example is shown in Figure \ref{fig:subshortest}. An agent starts at vertex 1 and aims to reach vertex 6 at minimum expected cumulative cost. The cost for edge $e_{ij}$ (from vertex $i$ to $j$) is distributed $\mathcal N(\mu_{ij},\sigma^2_{ij})$ and independent from the costs of other edges and independent across time. At every decision epoch, the agent chooses an edge to traverse out of the current vertex without knowing the actual costs. After the decision is made, a realization of edge costs is revealed and the agent incurs the one-stage cost associated with the traversed edge.

\begin{figure}[h]
        \centering
        \begin{subfigure}[b]{0.49\textwidth}
                \centering
                \includegraphics[width=\textwidth]{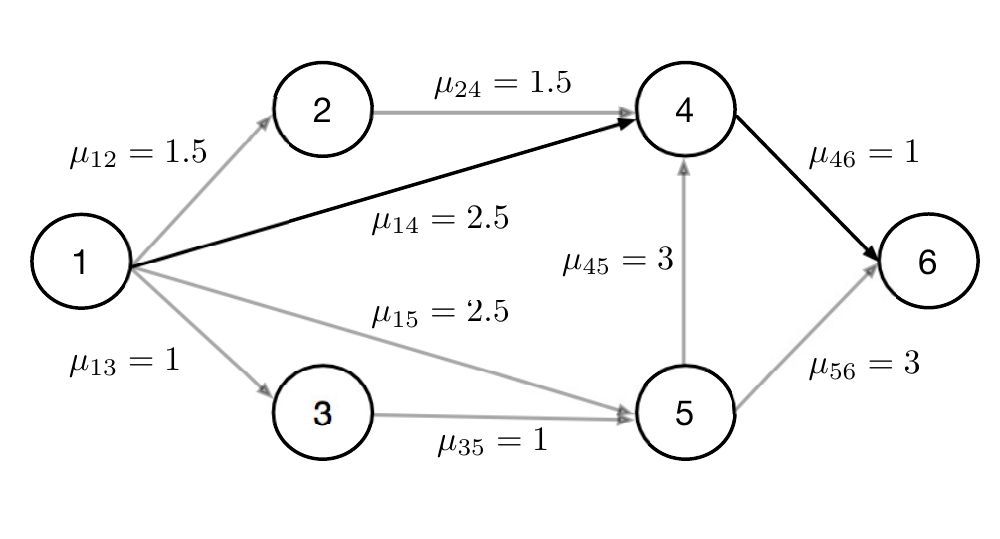}
                \caption{Graph with Mean Costs}
                \label{fig:subshortest}
        \end{subfigure}
        \begin{subfigure}[b]{0.49\textwidth}
                \centering
                \includegraphics[width=\textwidth]{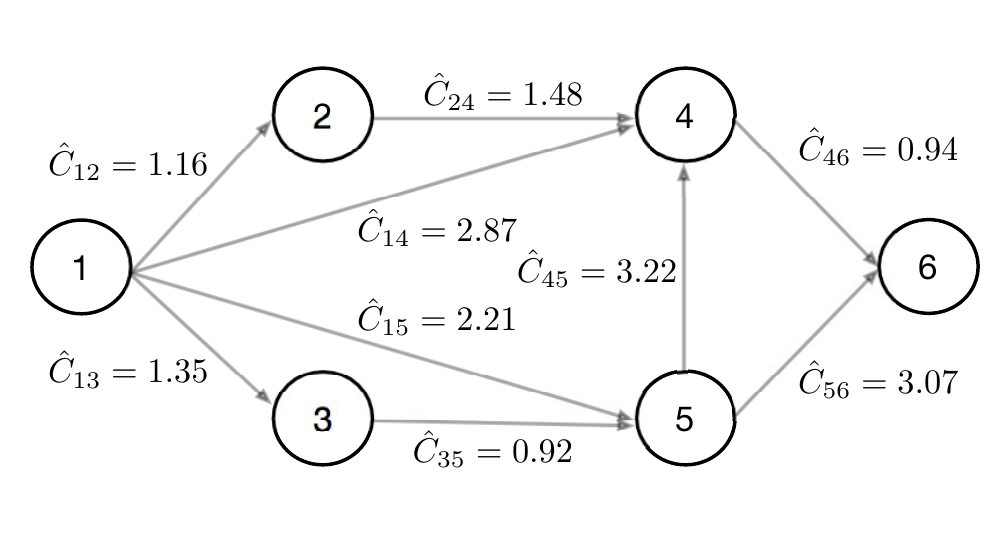}
                \caption{Graph with Sampled Costs}
                \label{fig:subshortest2}
        \end{subfigure}
        \caption{Shortest Path Problem with Random Edge Costs}
        \label{fig:shortest}
\end{figure}

The mean of the cost distributions are also shown in Figure \ref{fig:subshortest} and we assume that $\sigma_{ij} = 0.25$. The optimal path is $1 \rightarrow 4 \rightarrow 6$, which achieves an expected cumulative cost of 3.5. Consider applying Primal-Dual MCTS at vertex 1, meaning that we are choosing between traversing edges $e_{12}$, $e_{13}$, $e_{14}$, and $e_{15}$. The shortest paths after choosing $e_{12}$, $e_{13}$, and $e_{15}$ are $1 \rightarrow 2 \rightarrow 4 \rightarrow 6$ (cost of 4), $1 \rightarrow 3 \rightarrow 5 \rightarrow 6$ (cost of 5), and $1 \rightarrow 5 \rightarrow 6$ (cost of 5.5), respectively. Hence, $Q^*(1,e_{12}) = 4$, $Q^*(1,e_{13}) = 5$, $Q^*(1,e_{14})=3.5$, and $Q^*(1,e_{15}) = 5.5$.

We now illustrate several consecutive \emph{expansion steps} (this means that there are non-expansion steps in-between that are not shown) from the point of view of vertex 1, where there are four possible actions, $e_{1i}$ for $i = 2, 3, 4, 5$. On every expansion step, we use one sample of exogenous information (costs) to perform the information relaxation step and compute a standard dual (lower) bound. For simplicity, suppose that on every expansion step, we see the same sample of costs that are shown in Figure \ref{fig:subshortest2}. By finding the shortest paths in the graph with sampled costs, the sampled dual bounds are thus given by $\bar{u}^n(1,e_{12}) = 3.58$, $\bar{u}^n(1,e_{13}) = 5.34$, $\bar{u}^n(1,e_{14})=3.81$, and $\bar{u}^n(1,e_{15}) = 5.28$ (assuming the initial stepsize is 1). Figure \ref{fig:expansion} illustrates the expansion process.
\begin{enumerate}
\item In the first expansion, nothing has been expanded so we simply expand edge $e_{12}$ because it has the lowest dual bound. Note that this is not the optimal action; the optimistic dual bound is the result of noise.
\item After some iterations, learning has occurred for $\bar{Q}^n(1,e_{12})$ and it is currently estimated to be 3.97. We expand $e_{14}$ because it is the only unexpanded action with a dual bound that is better than 3.97. This is the optimal action.
\item In the last step of Figure \ref{fig:expansion}, no actions are expanded because their dual bounds indicate that they are no better than the currently expanded actions.
\end{enumerate}

\begin{figure}[h]
   \hspace{-15pt}\includegraphics[width=1.1\textwidth]{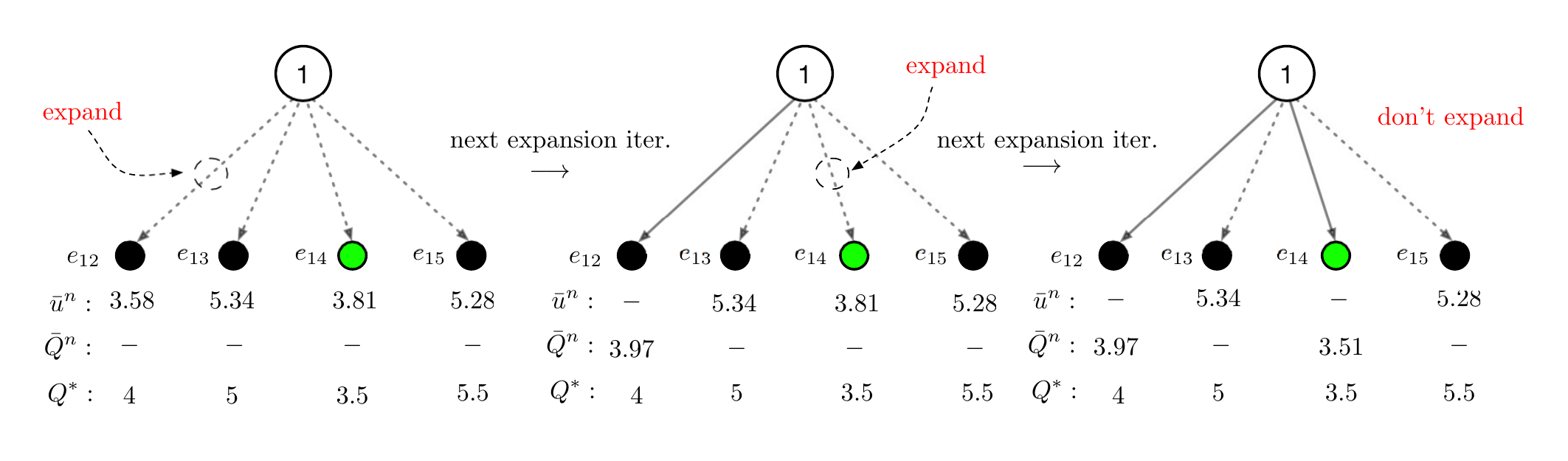}
    \vspace{-10pt}
    \caption{Expansion Steps for the Example Problem}
    \label{fig:expansion}
\end{figure}

\end{example}

\section{Analysis of Convergence}
\label{sec:conv}
Let $\mathscr T^\infty$ be the ``limiting partial decision tree'' as iterations $n \rightarrow \infty$. Similarly, we use the notation $\mathcal X^\infty$, $\mathcal X^\infty(\y)$, $\tilde{\mathcal X}^\infty(\y)$, $\mathcal Y^\infty$, $\mathcal Y^\infty(\x)$, and $\tilde{\mathcal Y}^\infty(\x)$ to describe the random sets of expanded and unexpanded nodes of the tree in the limit, analogous to the notation for a finite iteration $n$. Given that there are a finite number of nodes and that the cardinality of these sets is monotonic with respect to $n$, it is clear that these limiting sets are well-defined. 

Recall that each iteration of the algorithm generates a leaf node $\x_{\tau_s}^n$, which also represents the path down the tree for iteration $n$. Before we begin the convergence analysis, let us state a few assumptions.
\begin{assumption}
Assume the following hold. 
\begin{enumerate}[label=(\roman*),labelindent=1in]

\item There exists an $\epsilon^\textnormal{s} > 0$ such that given any tree $\mathscr T$ containing a state node $\x_t \in \mathcal X$ and a state-action node $\y_t=(\x_t,a_t)$ with $a_t \in \mathcal A_{\x_t}$, it holds that $\P( \pis(\x_t,\mathscr T) = \y_t)  \ge \epsilon^\textnormal{s}$.
\item Given a tree $\mathscr T$ containing a state action node $\y_t$, if all child state nodes of $\y_t$ have been expanded, then
\[
\P(\pis(\y_t,\mathscr T) = \x_{t+1}) = \P(S_{t+1} = s_{t+1} \, | \, \y_t)
\]
where $\x_{t+1} = (\y_t,s_{t+1})$. This means that sampling eventually occurs according to the true distribution of $S_{t+1}$.

\item There exists an $\epsilon^\textnormal{a} > 0$ such that given any tree $\mathscr T$ containing a state node $\x_t \in \mathcal X$ and action $a_t \in \mathcal A_{\x_t}$, it holds that $\P( a_t \in \pia(\x_t,\mathscr T))  \ge \epsilon^\textnormal{a}$.
\item There are an infinite number of progressive widening iterations: $|\mathcal N_x| = |\mathcal N_y| = \infty$. 
\item For any state node $\mathbf{x_t} \in \mathcal X$ and action $a_t$, the stepsize $\alpha^n(\x_t,a_t)$ takes the form
\[
\alpha^n(\x_t,a_t) = \tilde{\alpha}^n \, \indicate{\x_t \in  \x_{\tau_s}^n} \,  \indicate{v^n(\x_t) \in \, \mathcal N_x} \, \indicate{a_t \in \pia(\x_t,\mathscr T^n)}, 
\]
for some possibly random sequence $\tilde{\alpha}^n$. This means that whenever the dual lookahead update (\ref{eq:duallookahead}) is not performed, the stepsize is zero. In addition, the stepsize sequence satisfies
\[
\sum_{n=0}^\infty \alpha^n(\x_t,a) = \infty \; \; a.s. \quad \text{and} \quad \sum_{n=0}^\infty \alpha^n(\x_t,a)^2 < \infty \; \; a.s.,
\]
the standard stochastic approximation assumptions.
\item As $n \rightarrow \infty$, the backpropagation mixture parameter $\lambda^n \rightarrow 1$.
\end{enumerate}
\label{ass:one}
\end{assumption}

An example of a stepsize sequence that satisfies Assumption \ref{ass:one}(v) is $1/l^n(\x_t,a_t)$. We now use various aspects of Assumption \ref{ass:one} to demonstrate that expanded nodes within the decision tree are visited infinitely often. This is, of course, crucial in proving convergence, but due to the use of dual bounds, we only require that the limiting \emph{partial decision tree} be visited infinitely often. Previous results in the literature require this property on the fully expanded tree.

\begin{restatable}{lemma}{lemio}
Let $\x \in \mathcal X$ be a state node such that $\P(\x \in \mathcal X^\infty) > 0$. Under Assumption \ref{ass:one}, it holds that $v^n(\x) \rightarrow \infty$ almost everywhere on $\{\x \in \mathcal X^\infty\}$. Let $\y \in \mathcal Y$ be a state-action node such that $\P(\y \in \mathcal Y^\infty) > 0$. Similarly, we have $v^n(\y) \rightarrow \infty$ almost everywhere on $\{\y \in \mathcal Y^\infty\}$. Finally, let $\y' \in \mathcal Y$ be such that $\P(\y' \in \tilde{\mathcal Y}^\infty) > 0$. Then, $l^n(\y') \rightarrow \infty$ almost everywhere on $\{ \y' \in \tilde{\mathcal Y}^\infty$\}, i.e., the dual lookahead for the unexpanded state-action node $\y' \in \tilde{\mathcal Y}^\infty(\x')$ is performed infinitely often. 
\label{lem:io}
\end{restatable}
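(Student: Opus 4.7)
The plan is to prove the three claims by a joint induction on the depth of the node in the tree, with the conditional Borel--Cantelli lemma (L\'evy's extension) as the main probabilistic tool: if $\{A_n\}$ is adapted to a filtration $\{\mathcal F_n\}$ and $\sum_n \P(A_n \mid \mathcal F_{n-1}) = \infty$ almost surely, then $A_n$ occurs infinitely often almost surely. Here $\mathcal F_{n-1}$ will denote the natural filtration through iteration $n-1$. The base case is immediate: the root $\x_0$ is traversed on every iteration, so $v^n(\x_0) = n \to \infty$ deterministically.

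For the inductive step from a state node $\x_t$ with $v^n(\x_t) \to \infty$ to an expanded state-action child $\y_t = (\x_t, a_t) \in \mathcal Y^\infty$, I would argue that after the random iteration at which $\y_t$ is added, each visit to $\x_t$ either (a) triggers a Case 1 expansion at $\x_t$ that successfully adds a \emph{new} state-action child distinct from $\y_t$, diverting the descent away from $\y_t$, or (b) proceeds via $\pis(\x_t,\cdot)$ --- either because $v^n(\x_t) \notin \mathcal N_x$, because $\tilde{\mathcal Y}^n(\x_t) = \emptyset$, or because the expansion was skipped when no candidate's dual bound exceeded $\bar V^{n-1}(\x_t)$. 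Finiteness of $\mathcal A_{\x_t}$ forces outcome (a) to occur at most $|\mathcal A_{\x_t}|$ times in total, so there is an almost surely finite random time after which every visit to $\x_t$ is of type (b); Assumption \ref{ass:one}(i) then supplies a uniform lower bound of $\epsilon^\textnormal{s}$ on the conditional probability that $\y_t$ is selected at each such visit, and conditional Borel--Cantelli yields $v^n(\y_t) \to \infty$ on $\{\y_t \in \mathcal Y^\infty\}$. The symmetric step from $\y_t$ to an expanded state child $\x_{t+1} = (\y_t, s_{t+1}) \in \mathcal X^\infty$ is entirely analogous: Case 2 expansions at $\y_t$ can occur at most $|\mathcal S|$ times by finiteness of the state space, after which the sampling rule described in Section \ref{subsec:mcts} selects $\x_{t+1}$ with conditional probability at least $\P(S_{t+1} = s_{t+1} \mid \y_t) > 0$ (since the normalizing denominator lies in $(0,1]$), and conditional Borel--Cantelli closes this step as well.

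For the dual-lookahead claim, I would take $\y' = (\x, a) \in \tilde{\mathcal Y}^\infty$ with $\x \in \mathcal X^\infty$ and observe that, because $\y' \in \tilde{\mathcal Y}^n(\x)$ for every $n$, the node $\x$ remains expandable forever, so every visit with $v^n(\x) \in \mathcal N_x$ triggers Case 1 at $\x$. The first part of the lemma gives $v^n(\x) \to \infty$, while Assumption \ref{ass:one}(iv) gives $|\mathcal N_x| = \infty$, so Case 1 expansions at $\x$ are performed infinitely often. At each such iteration, Assumption \ref{ass:one}(iii) makes the conditional probability that $a \in \pia(\x, \mathscr T^{n-1})$ at least $\epsilon^\textnormal{a}$; whenever this happens, $a$ lies in the candidate set $\Atilde^n(\x)$ and a dual lookahead for $\y'$ is performed, incrementing $l^n(\y')$. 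One more application of conditional Borel--Cantelli would then deliver $l^n(\y') \to \infty$ on $\{\y' \in \tilde{\mathcal Y}^\infty\}$.

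The hard part will be the careful accounting of the expansion step: successful expansions can divert the selection path away from any fixed sibling of the currently considered node, so one must argue that these diversions occur only finitely often before the exploration lower bounds $\epsilon^\textnormal{s}$, $\epsilon^\textnormal{a}$, and the strictly positive transition probabilities take over uniformly at each visit. The finiteness of $\mathcal A$ and $\mathcal S$, in conjunction with the progressive widening assumption, is precisely what enables this reduction from the messy early regime of the algorithm to one in which a clean Borel--Cantelli argument applies.
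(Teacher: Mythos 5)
Your proposal is correct and follows essentially the same route as the paper's proof: induction down the tree combined with a Borel--Cantelli argument driven by the exploration lower bounds $\epsilon^{\textnormal{s}}$, $\epsilon^{\textnormal{a}}$ and the positive transition probabilities. If anything, you are more careful than the paper on two points it glosses over --- explicitly invoking the \emph{conditional} (L\'evy) form of Borel--Cantelli, which is the version actually needed since the selection events across iterations are dependent, and accounting for the finitely many iterations on which a successful expansion diverts the descent away from the fixed child before the uniform selection bound applies.
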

\begin{proof}
See Appendix \ref{sec:proofs}.
\end{proof}

The next lemma reveals the central property of Primal-Dual MCTS (under the assumption that all relevant values converge appropriately): for any expanded state node, its corresponding optimal state-action node is expanded. In other words, if a particular action is never expanded, then it must be suboptimal. 

\begin{restatable}{lemma}{lemargmax}
Consider a state node $\x_t \in \mathcal X$. Consider the event on which $\x_t \in \mathcal X^\infty$ and the following hold:
\begin{enumerate}[label=(\roman*),labelindent=1in]

\item $\bar{Q}^n(\y_t) \rightarrow Q^*(\y_t)$ for each expanded $\y_t \in \mathcal Y^\infty(\x_t)$,
\item $\bar{u}^n(\y_t') \rightarrow u^{\nu}(\y_t')$ for each unexpanded $\y_t' \in \tilde{\mathcal Y}^\infty(\x_t)$.
\end{enumerate}
Then, on this event, there is a state-action node $\y_t^* = (\x_t,a^*_t) \in \mathcal Y^\infty(\x_t)$ associated with an optimal action $a_t^* \in \argmax_{a \in \mathcal A} Q_t^*(s_t,a)$.
\label{lem:argmax}
\end{restatable}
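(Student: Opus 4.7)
The plan is a proof by contradiction. Fix an outcome in the event of the lemma and suppose that no optimal action at $s_t$ is expanded; equivalently, writing $A^* = \argmax_{a \in \mathcal A_{s_t}} Q^*(\x_t,a)$, every $a^* \in A^*$ gives $\y^* := (\x_t,a^*) \in \tilde{\mathcal Y}^\infty(\x_t)$. Condition (ii) combined with Proposition \ref{prop:upperbound} then yields $\bar u^n(\y^*) \to u^\nu(\y^*) \ge Q^*(\y^*) = V^*(\x_t)$.

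Next I would show that $\bar V^n(\x_t)$ converges to a value strictly below $V^*(\x_t)$. Because $\mathcal A_{\x_t}$ is finite and $\mathcal Y^n(\x_t)$ is monotone in $n$, eventually $\mathcal Y^n(\x_t) = \mathcal Y^\infty(\x_t)$, and by the contradiction hypothesis every $\y \in \mathcal Y^\infty(\x_t)$ corresponds to a suboptimal action, so $\Delta := V^*(\x_t) - \max_{\y \in \mathcal Y^\infty(\x_t)} Q^*(\y) > 0$. Condition (i) then gives $\max_{\y \in \mathcal Y^n(\x_t)} \bar Q^n(\y) \to V^*(\x_t) - \Delta$. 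Since contributions are bounded, the running average $\tilde V^n(\x_t)$ from (\ref{eq:backprop2}) is uniformly bounded, so combined with $\lambda^n \to 1$ (Assumption \ref{ass:one}(vi)), the backup rule (\ref{eq:backprop3}) forces $\bar V^n(\x_t) \to V^*(\x_t) - \Delta$.

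Putting the two limits together, for all large $n$ we have $\bar u^n(\y^*) > \bar V^{n-1}(\x_t) + \Delta/3$. Hence whenever $\x_t$ is visited, a progressive-widening trigger fires (i.e., $v^n(\x_t) \in \mathcal N_x$), and some $a^* \in A^*$ is sampled into $\tilde{\mathcal A}^n(\x_t)$, the best candidate upper bound (which is at least $\bar u^n(\y^*)$) strictly exceeds $\bar V^{n-1}(\x_t)$, so Case 1 of the expansion phase is forced to add a new state-action child at $\x_t$. Finally, Lemma \ref{lem:io} gives $v^n(\x_t) \to \infty$, so together with $|\mathcal N_x| = \infty$ (Assumption \ref{ass:one}(iv)) widening triggers at $\x_t$ occur at infinitely many iterations; at each such iteration, Assumption \ref{ass:one}(iii) puts any fixed $a^* \in A^*$ into the candidate pool with conditional probability at least $\epsilon^{\textnormal{a}}$, so a conditional Borel--Cantelli argument yields that expansion-firing opportunities occur infinitely often almost surely. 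This forces infinitely many expansions at $\x_t$, which is impossible since $|\mathcal A_{\x_t}| < \infty$; equivalently, $\tilde{\mathcal Y}^\infty(\x_t)$ must eventually be exhausted, so $A^* \subseteq \mathcal Y^\infty(\x_t)$, contradicting the hypothesis.

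The main obstacle I anticipate is the asymptotic bookkeeping for $\bar V^n(\x_t)$ in the second paragraph: the backup (\ref{eq:backprop3}) interpolates a moving running average $\tilde V^n$ with a max over a growing but eventually stable set of children, all driven by updates to $\bar Q^n$ that themselves only occur along visits to $\x_t$. Checking carefully that the $\lambda^n \to 1$ schedule suppresses the $\tilde V^n$ term and that condition (i) is indeed applicable to every surviving child requires some care about null sets. A secondary subtlety is turning the per-opportunity lower bound $\epsilon^{\textnormal{a}}$ into an almost-sure ``infinitely often'' statement, which calls for a conditional Borel--Cantelli argument on the filtration generated by the tree history.
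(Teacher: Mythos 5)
Your proof is correct and follows essentially the same route as the paper's: a contradiction argument that plays weak duality ($u^{\nu}(\y_t^*) \ge Q^*(\y_t^*) = V^*(\x_t)$ from Proposition \ref{prop:upperbound}) against the strictly suboptimal limit of $\bar{V}^n(\x_t)$ obtained from conditions (i)--(ii) and $\lambda^n \rightarrow 1$, with Lemma \ref{lem:io} supplying infinitely many candidate opportunities for the unexpanded optimal action. The only cosmetic difference is the endgame: the paper passes the skip-inequality $\bar{u}^n(\y_t^*) \le \bar{V}^{n-1}(\x_t)$ to the limit to contradict weak duality directly, whereas you show the expansion test must eventually fire and derive the contradiction from the finiteness of $\mathcal A_{\x_t}$ --- two framings of the same step.
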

\begin{proof}[Sketch of Proof:]
The essential idea of the proof is as follows. If all optimal actions are unexpanded and the assumptions of the lemma hold, then eventually, the dual bound associated with an unexpanded optimal action must upper bound the values associated with the expanded actions (all of which are suboptimal). Thus, given the design of our expansion strategy to explore actions with high dual upper bounds, it follows that an optimal action must eventually be expanded.
Appendix \ref{sec:proofs} gives the technical details of the proof.
\end{proof}

We are now ready to state the main theorem, which shows consistency of the proposed procedure. We remark that it is never required that $\mathcal X^\infty_t = \mathcal X_t$ or $\mathcal Y^\infty_t = \mathcal Y_t$. In other words, an important feature of Primal-Dual MCTS is that the tree does not need to be fully expanded in the limit, as we alluded to earlier in Figure \ref{fig:trees}.

\begin{restatable}{theorem}{mainthm}
Under Assumption \ref{ass:one}, the Primal-Dual MCTS procedure converges at the root node (initial state) in two ways:
\[
\bar{V}^n(\x_0) \rightarrow V^*(\x_0) \; \; a.s. \quad \textnormal{and} \quad \limsup_{n \rightarrow \infty} \, \argmax_{\y \in \mathcal Y^n(\x_0)} \bar{Q}^n(\y) \subseteq \argmax_{\y_0 = (\x_0,a)} Q^*(\y_0) \; \; a.s.
\]
meaning that the value of the node $\x_0$ converges to the optimal value and that an optimal action is both expanded and identified.
\label{thm:main}
\end{restatable}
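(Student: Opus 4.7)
The plan is to prove the theorem by \emph{backward induction} on the stage index $t$, establishing simultaneously on almost-sure events that (a) $\bar V^n(\x_t) \to V^*(\x_t)$ for every $\x_t \in \mathcal X^\infty$; (b) $\bar Q^n(\y_t) \to Q^*(\y_t)$ for every $\y_t \in \mathcal Y^\infty$; and (c) $\bar u^n(\y_t') \to u^\nu(\y_t')$ for every $\y_t' \in \tilde{\mathcal Y}^\infty$. The base case at $t=T$ is immediate since all terminal values are zero by definition. The two root-level conclusions then follow from (a) and (b) at $t=0$: the first is a direct specialization, while the second follows because Lemma \ref{lem:argmax} guarantees an optimal action lies among the expanded children of $\x_0$, so once $\bar Q^n(\y_0) \to Q^*(\y_0)$ for every expanded $\y_0$, the set $\argmax_{\y \in \mathcal Y^n(\x_0)} \bar Q^n(\y)$ must eventually be contained in the set of optimal actions (an $\epsilon$-argument using the strict gap between $V^*(\x_0)$ and suboptimal $Q^*$ values).

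For the inductive step at stage $t$, I would first establish (c). For $\y_t' \in \tilde{\mathcal Y}^\infty$, Lemma \ref{lem:io} yields $l^n(\y_t') \to \infty$, and by Proposition \ref{prop:upperbound} each lookahead in \eqref{eq:duallookahead} produces an unbiased estimate of $u^\nu(\y_t')$ along an independent draw of $\W$. Under Assumption \ref{ass:one}(v), the stepsize sequence restricted to lookahead events satisfies the Robbins--Monro conditions almost surely, so a standard stochastic-approximation argument (with bounded i.i.d.\ noise, since contributions are assumed bounded) yields $\bar u^n(\y_t') \to u^\nu(\y_t')$ almost surely.

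Next I would prove (b) and then (a) at stage $t$ using the inductive hypothesis. For $\y_t \in \mathcal Y^\infty$, Lemma \ref{lem:io} gives $v^n(\y_t) \to \infty$, and Assumption \ref{ass:one}(iv) combined with the Case-2 expansion mechanism guarantees that every downstream state of positive transition probability is eventually in $\mathcal X^\infty(\y_t)$. Assumption \ref{ass:one}(ii) then forces the empirical child-selection frequencies at $\y_t$ to converge to the true transition probabilities, so the running-average backup \eqref{eq:backprop1}, paired with the inductive hypothesis $\bar V^n(\x_{t+1}) \to V^*(\x_{t+1})$ on children (and accounting for the stage contribution carried in the simulation values), yields $\bar Q^n(\y_t) \to Q^*(\y_t)$. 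With (b) and (c) in hand at stage $t$, Lemma \ref{lem:argmax} applies on $\{\x_t \in \mathcal X^\infty\}$ to produce an expanded optimal child $\y_t^*$. The averaging update \eqref{eq:backprop2} drives $\tilde V^n(\x_t)$ to a convex combination of the $Q^*$-values of expanded children, and since $\lambda^n \to 1$ by Assumption \ref{ass:one}(vi), the mixture \eqref{eq:backprop3} is asymptotically dominated by the $\max$ term, giving $\bar V^n(\x_t) \to \max_{\y \in \mathcal Y^\infty(\x_t)} Q^*(\y) = Q^*(\y_t^*) = V^*(\x_t)$.

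The main obstacle is the inductive coupling across stages: the convergence at stage $t$ depends on convergence at stage $t+1$, and each of (a)--(c) generates its own null set via stochastic-approximation or infinite-visit arguments. Care is needed to establish all three statements on a \emph{common} almost-sure event; this is feasible because the full tree contains countably many nodes and the widening schedule is countable, so a countable intersection of full-measure events remains full-measure. A secondary subtlety is the asymptotic transition from ``mix'' to ``max'' in \eqref{eq:backprop3}: one must verify that $\lambda^n \to 1$ together with the almost-sure convergence of $\bar Q^n$ at expanded children and uniform boundedness of the estimates suffice to drive $\bar V^n(\x_t)$ to the maximum, which follows from a straightforward $\epsilon$-argument rather than any delicate exchange of limits.
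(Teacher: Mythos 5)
Your proposal is correct and follows essentially the same route as the paper's proof: backward induction from $t=T$, establishing convergence of $\bar Q^n$ on expanded state-action nodes and of $\bar u^n$ on unexpanded ones via stochastic approximation (Lemma \ref{lem:io} supplying infinite visits/lookaheads and Assumption \ref{ass:one}(ii) making the tail of the backup unbiased once all reachable children are expanded), then invoking Lemma \ref{lem:argmax} and $\lambda^n \to 1$ to push convergence of $\bar V^n$ up one level, with the root-node argmax statement following at $t=0$. The only differences are cosmetic (you treat the dual-bound convergence before the $Q$-convergence, and you make explicit the countable-intersection-of-null-sets bookkeeping that the paper leaves implicit).
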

\begin{proof}[Sketch of Proof:]
The proof of the main theorem places the results established in the previous lemmas in an induction framework that moves up the tree, starting from state nodes $\x_T \in \mathcal X_T$. We first establish the following convergence results:
\begin{align*}
\bar{Q}^n(\y_{t}) &\rightarrow Q^*(\y_{t}) \, \indicate{\y_{t} \in \mathcal Y_{t}^\infty} \quad a.s.,\\
\bar{u}^n(\y_{t}) &\rightarrow u^{\nu}(\y_t) \, \indicate{\y_{t} \in \tilde{\mathcal Y}^\infty_t} \quad a.s.,
\end{align*}
after which Lemma \ref{lem:argmax} can be invoked to conclude $\bar{V}^n(\x_{t}) \rightarrow V^*(\x_{t}) \, \indicate{\x_{t} \in \mathcal X_{t}^\infty}$ almost surely. The full details are given in Appendix \ref{sec:proofs}.
\end{proof}

 In addition, let us comment that Assumption \ref{ass:one}(v) could also be replaced with an alternative condition on the selection policy; for example, if the visits concentrate on the optimal action asymptotically, then the average over the state-action values would converge to the optimal value. \cite{Chang2005} takes this approach by utilizing results from the multi-armed bandit literature \citep{Bubeck2012}. The \cite{Chang2005} method, although similar to MCTS (and indeed served as inspiration for MCTS), differs from our description of MCTS in a crucial way: the levels or stages of the tree are never updated together. The algorithm is a one-stage method which calls itself in a recursive fashion starting from $t=T$. When nodes at a particular stage $t$ are updated, the value function approximation for stage $t+1$ has already been fixed; hence, results from the multi-armed bandit literature can be directly applied. Unfortunately, this is not the case for MCTS, where updates to the entire tree are made at every iteration. 

\section{Driver Behavior on a Ride-Sharing Platform}
\label{sec:num}
In this section, we show numerical results of applying Primal-Dual MCTS on a model of driver behavior on a ride-sharing platform (e.g., Uber and Lyft). Our motivation for studying this problem is due to the importance of incorporating the aspect of driver decisions into fleet simulation models. Such large-scale models of the entire system operations can aid in making \emph{platform-level} decisions, including (1) spatial dynamic pricing for riders (Uber's ``surge pricing''), (2) dynamic wages/incentives for drivers (Uber's ``earnings boost''), and (3) the integration of autonomous vehicles with traditional human drivers (e.g., in Pittsburgh, Pennsylvania). Since optimal decisions from the driver point of view intimately depend on parameters (e.g., prices) determined by the platform, we envision that the problem studied here is a crucial building block within a higher level simulation model. Experimental testing suggests that the new version of MCTS produces deeper trees and reduced sensitivity to the size of the action space.
\subsection{Markov Decision Process Model}
The operating region is represented as a connected graph consisting of set of locations $\mathcal L = \{1, 2, \ldots, M\}$, and if $i, j \in \mathcal L$ are ``adjacent,'' then there exists an edge $e_{ij}$ connecting them. Let $\mathcal L(i)$ be the set of adjacent locations to $i$, including $i$ itself. The units of time are in increments of a ``minimum trip length,'' where a trip between adjacent nodes requires one time increment. A \emph{trip request} is an ordered pair $r = (i, j)$ where $i$ is the starting location and $j$ is the destination location. The status $\sigma_t$ of the driver can take several forms: ``idle,'' ``en-route to trip $r=(i,j)$,'' and ``with passenger, toward destination $l_j$.'' Respectively, these statuses are encoded as $\sigma_t = (0,\varnothing)$, $\sigma_t = (1,(i,j))$, and $\sigma_t = (2,j)$. The second element of $\sigma_t$ is interpreted as the current ``goal'' of the driver.

The sequence of events for an idle driver is as follows: (1) at the beginning of period $t$, the driver observes a set of requested trips $\mathcal R_t$; (2) the decision is made to either accept one of the trips in $\mathcal R_t$ \emph{or} reject all of them and move to an adjacent location; and (3) the driver's status is updated. If the driver is not idle (i.e. $\sigma_t \ne 0$), then there is no decision to be made and the current course is maintained.

Let $L_t \in \mathcal L$ be the location of the driver at time $t$. We assume that the stochastic process describing the sets of trip requests $\{\mathcal R_t\}$ is independent across time and $|\mathcal R_t| \le \mathcal R_\textnormal{max}$. Thus, the state variable of the MDP is $S_t = (L_t, \sigma_t, \mathcal R_t)$ and let $\mathcal S$ be the state space. The set of available decisions is given by
\[
\mathcal A_{S_t} = \begin{cases} \mathcal L(L_t) \, \cup \, \mathcal R_t &\textnormal{if} \quad \sigma_t = 0,\\
								 \varnothing							 &\textnormal{otherwise.}  \end{cases}
\]
Suppose that there is a well-defined choice of a shortest path between any two locations $i$ and $j$ given by a sequence of locations along the path $p(i,j) = (l_1, l_2, \ldots, l_{d(i,j)}) \in \mathcal L^{d(i,j)}$, where $d(i,j)$ is the distance of the shortest path between $i$ and $j$ and $l_{d(i,j)} = j$. We will use the notation $p_1(i,j)$ to represent the first element $l_1$ of the sequence $p(i,j)$, i.e., the next location to visit in order to reach $j$ starting from $i$.
Let $a_t \in \mathcal A_{S_t}$ be the decision made at time $t$. The transition of the driver's location is
\[
L_{t+1} = f_L(L_t,a_t)= \begin{cases} a_t &\textnormal{if} \quad \sigma_t = 0,\, a_t \in \mathcal L(L_t),\\
						p_1(L_t, i)	&\textnormal{if} \quad \sigma_t = 0,\, a_t = (i,j) \in \mathcal R_t \text{ or } \sigma_t = (1, (i,j)),\\
						p_1(L_t, j)	&\textnormal{if} \quad \sigma_t = (2,j).  \end{cases}
\]
Similarly, we can write the transition for the driver's status as
\[
\begin{aligned}
\sigma_{t+1} = &f_\sigma(\sigma_t,a_t)\\
&=\begin{cases} (0,\varnothing) &\textnormal{if} \quad \sigma_t = 0,\, a_t \in \mathcal L(L_t),\\
						(1,(i,j))	&\textnormal{if} \quad (\sigma_t = 0,\, a_t = (i,j) \in \mathcal R_t \text{ or } \sigma_t = (1, i,j)) \text{ and } d(L_t,j) > 1,\\
						(2,j)	&\textnormal{if} \quad (\sigma_t = 0,\, a_t = (i,j) \in \mathcal R_t \text{ or } \sigma_t = (1, i,j)) \text{ and } d(L_t,j) = 1,\\
						(2,j)	&\textnormal{if} \quad \sigma_t = (2,j) \text{ and } d(L_t,j) > 1,\\
						(0,\varnothing)	&\textnormal{if} \quad \sigma_t = (2,j) \text{ and } d(L_t,j) = 1.  \end{cases}
						\end{aligned}
\]
Suppose that the base fare is $w_\textnormal{base}$ and that the customer pays $w_\textnormal{dist}$ per unit distance traveled. The driver is profit-maximizing and contribution function is revenue generated from the customer with ``per mile'' travel costs $c$ subtracted (e.g., gas, vehicle depreciation) whenever the driver moves:
\[
c_t(S_t,a_t) = \bigl[w_\textnormal{base} + w_\textnormal{dist} \, d(i,j) \bigr]  \cdot \indicate{\sigma_t = 0,\, a_t \, = \, (i,j) \, \in \, \mathcal R_t}- c \cdot \indicate{L_t \ne f_L(L_t,a_t)}.
\]
The objective function is the previously stated (\ref{eq:mdpobj}), with $c_t(S_t,\pi(S_t))$ replacing the cost function $c_t(S_t,\pi(S_t),W_{t+1})$. Let $s=(l,\sigma,\mathcal R)$ and the corresponding Bellman optimality condition is 
\begin{align*}
&V_t^*(s) = \max_{a \in \mathcal A_s} \, c_t(s,a)   + \E \left[ V_{t+1}^*(f_L(l,a), f_\sigma(\sigma,a), \mathcal R_{t+1}) \right] \; \text{for all } s \in \mathcal S,\; t \in \mathcal T,\\
&V_T^*(s) = 0 \; \text{for all } s \in \mathcal S.
\end{align*}
Intuitively, the aim of the driver is to position the vehicle in the city and accepting  trip requests so that revenues can be collected without significant travel costs. Due to the size of the problem, standard MDP techniques for computing the optimal policy are intractable.
\subsection{Data, Problem Setup, \& Algorithm Details}
Our problem setting is the state of New Jersey and the experiments are performed on a dataset of all trips taken in one day throughout the state on a particular taxi service. We consider the situation where each unit of time corresponds to 15 minutes, i.e., the time it takes to travel the distance between any two adjacent locations. The driver is assumed to work for 10 hours a day, giving us $T = 40$. Over this time horizon, the dataset contains a total of 7{,}056 trips. The graph is built using realistic geography; each location in our model represents a $0.5 \times 0.5$ square mile area in the state of New Jersey. At each time step, the requests shown to the driver in the next time period, $\mathcal R_{t+1}$, are sampled uniformly from the data set, subject to $|\mathcal R_{t+1}| \le R_\textnormal{max}$.  The fare parameters are $w_\textnormal{base} = \$2.40$ (per trip), $w_\textnormal{dist} = \$2.02$ (per mile), and $c = \$0.05$ (per mile).

By making small adjustments to the graph and by changing $R_\textnormal{max}$, we consider three instances of the problem on the same dataset. Instance \texttt{D5} provides the driver with five possible decisions at every state: three neighboring locations and $R_\textnormal{max}= 2$ requests. Similarly, instance \texttt{D10} gives six neighboring locations and $R_\textnormal{max}= 4$ requests while instance \texttt{D15} gives 10 neighboring locations and $R_\textnormal{max}= 5$ requests.

We compare Primal-Dual MCTS and ``vanilla'' MCTS, which is the standard version of MCTS that does not sample the information relaxation bounds; all other parameters remain the same between the two versions.
Following existing work \citep{Chang2005,Kocsis2006}, the selection policy $\pis$ is chosen to be UCB1, as given in (\ref{eq:UCB1}). For this particular problem class, we found that the standard dual upper bound obtained by setting $\nu_t = 0$ was sufficient in improving the behavior of vanilla MCTS and we report results for this case. This information relaxation bound is computed by solving a \emph{deterministic dynamic program} after sampling a trajectory of ride requests from $t+1$ to $T$. The objective value of the dynamic program, is on average, an upper bound on the value of a state-action node. Our default policy $\pid$ involves a similar sequence of steps: (1) sample a future trajectory of ride requests, (2) implement the first decision, and (3) repeat from the next state. Although the computations are related to computing information relaxation bounds, this is a \emph{policy} (not a bound) that falls under the class of rolling-horizon procedures (see, e.g., \cite{Chand2002}).

\subsection{Numerical Results}
In this section, we discuss the numerical results obtained from applying Primal-Dual MCTS on the three instances of ride-sharing model, \texttt{D5}, \texttt{D10}, and \texttt{D15}, described above. It is important to first understand the qualitative implications of running Primal-Dual MCTS versus vanilla MCTS. The intuition is that because dual upper bounds are used in expansion decisions, fewer ``unnecessary'' expansions are performed. Therefore, it should be the case that in the same number of iterations with the same algorithm parameters, Primal-Dual MCTS generates deeper decision trees while vanilla MCTS generates wider decision trees. In MCTS applications, depth is preferred to breadth because it indicates a longer lookahead horizon that focuses on important parts of the tree.

\begin{figure}[!ht]
\small
        \centering
        \begin{subfigure}[b]{0.47\textwidth}
                \centering
                \includegraphics[width=\textwidth]{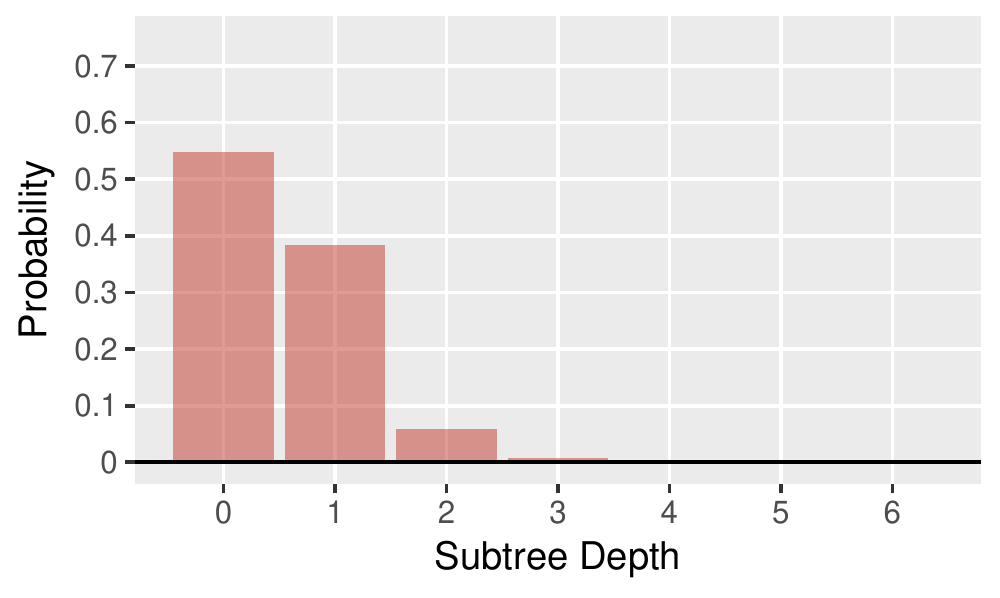}
                \caption{\texttt{D5}, Vanilla MCTS}
        \end{subfigure}
                \begin{subfigure}[b]{0.47\textwidth}
                \centering
                \includegraphics[width=\textwidth]{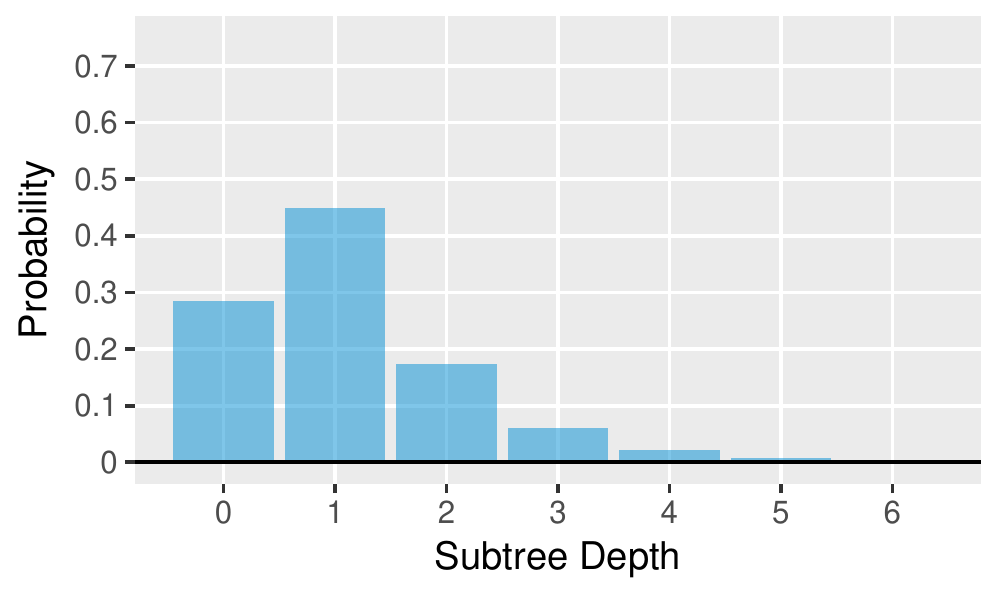}
                \caption{\texttt{D5}, Primal-Dual MCTS}
        \end{subfigure}\\

        \begin{subfigure}[b]{0.47\textwidth}
                \centering
                \includegraphics[width=\textwidth]{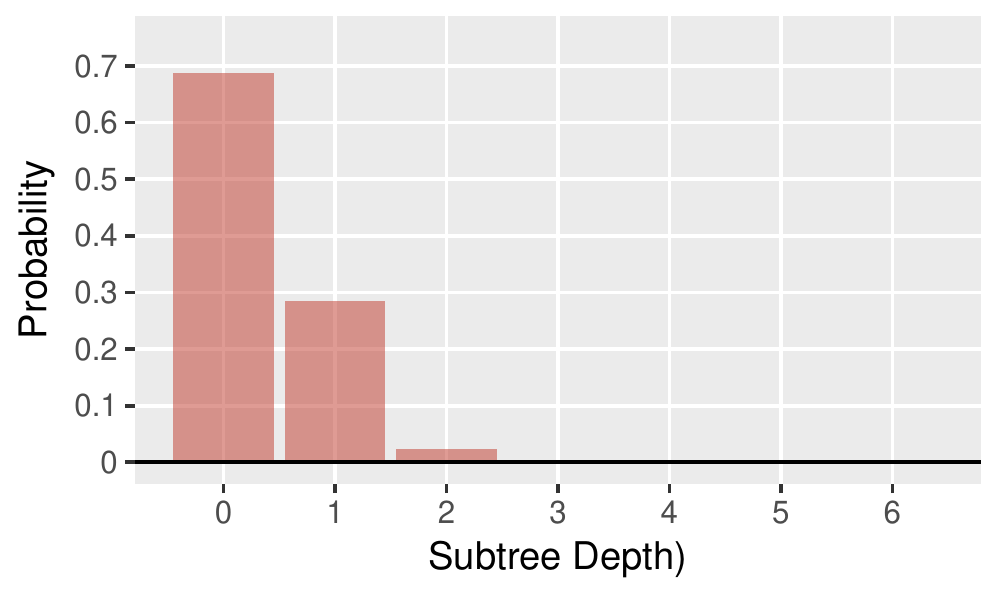}
                \caption{\texttt{D10}, Vanilla MCTS}
        \end{subfigure}
        \begin{subfigure}[b]{0.47\textwidth}
                \centering
                \includegraphics[width=\textwidth]{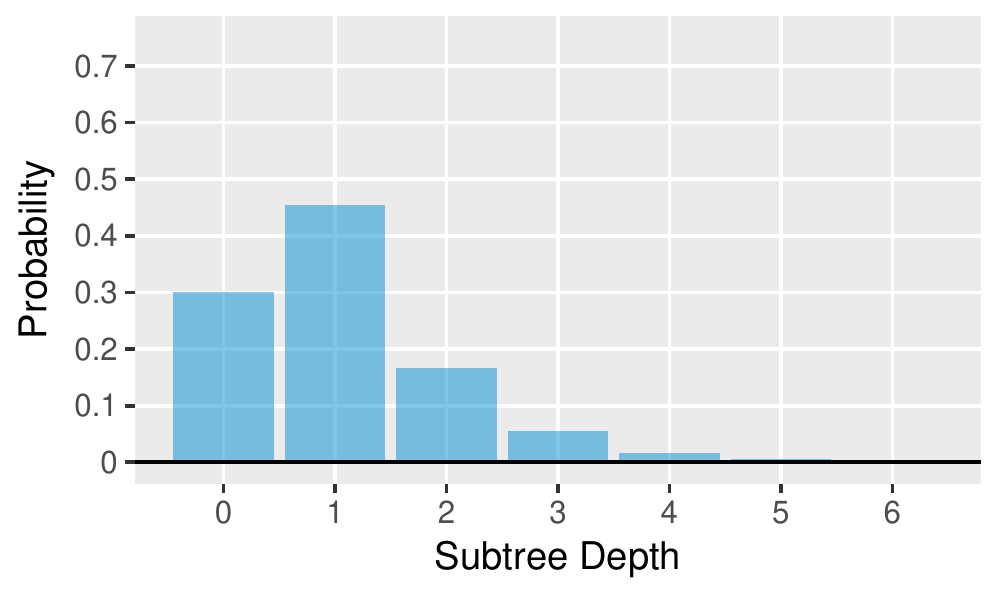}
                \caption{\texttt{D10}, Primal-Dual MCTS}
        \end{subfigure}\\

        \begin{subfigure}[b]{0.47\textwidth}
                \centering
                \includegraphics[width=\textwidth]{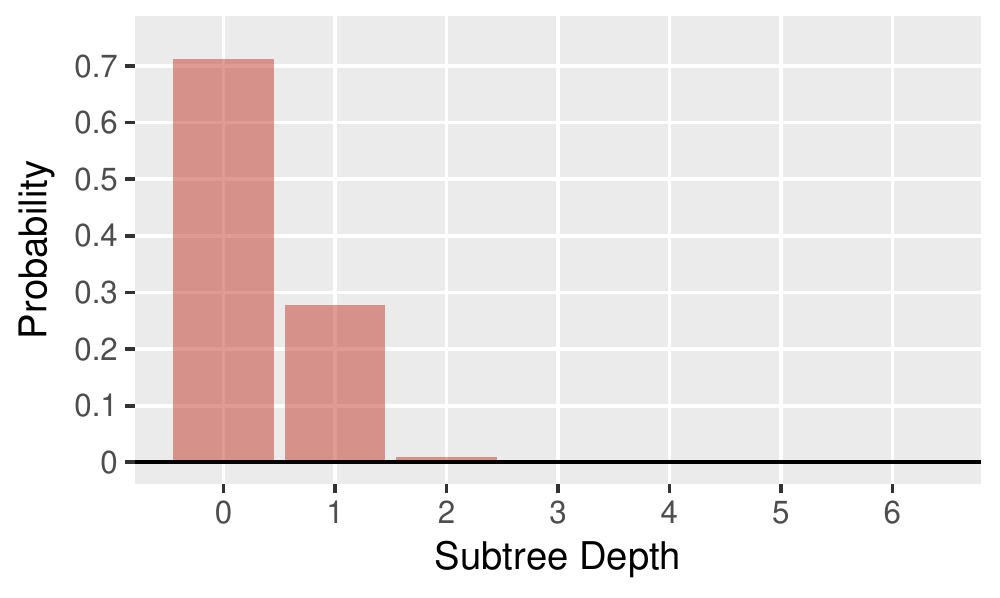}
                \caption{\texttt{D15}, Vanilla MCTS}
        \end{subfigure}       
        \begin{subfigure}[b]{0.47\textwidth}
                \centering
                \includegraphics[width=\textwidth]{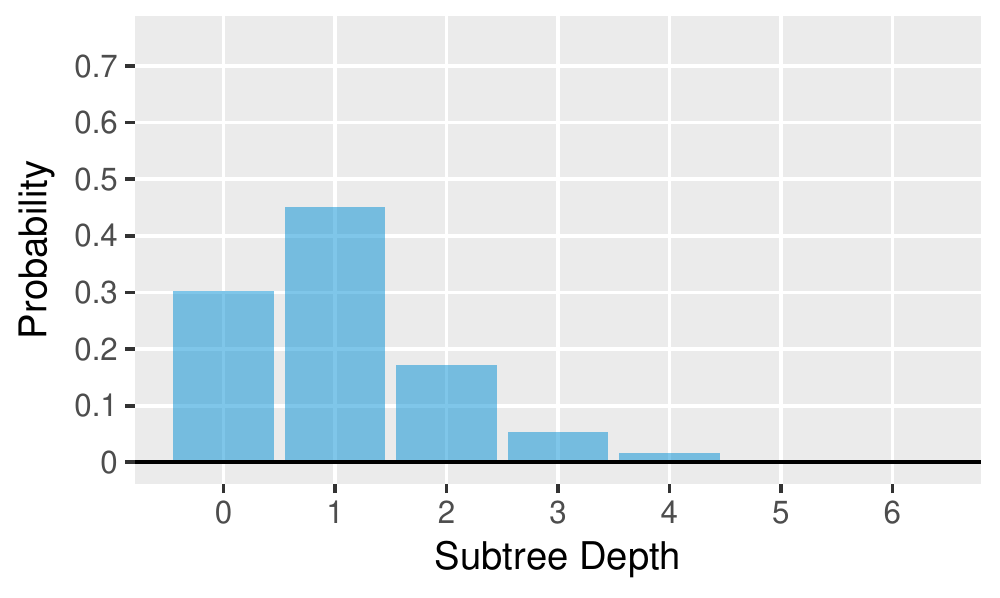}
                \caption{\texttt{D15}, Primal-Dual MCTS}
        \end{subfigure}
        \caption{Subtree Depth Distributions}
        \label{fig:subtree}
\end{figure}

To illustrate support for this intuition, we plot the empirical distributions of subtree depths (starting from any state node) for the two algorithms (after 25{,}000 iterations) on all three problem instances in Figure \ref{fig:subtree}. The \emph{depth} is defined in terms of state nodes and thus corresponds to decision epochs. We should note, however, that the physical size of the tree includes state-action nodes; thus, the actual depth is double what is shown in the Figure \ref{fig:subtree}. Although not visible from the figure, the depths of the full decision trees (i.e., from the root node) output by vanilla MCTS are 6, 5, and 5 for \texttt{D5}, \texttt{D10}, and \texttt{D15}, respectively. For Primal-Dual MCTS, the depths are 10, 10, and 9.

A major takeaway from Figure \ref{fig:subtree} is the relative insensitivity of Primal-Dual MCTS to an increasing action space. As the number of actions per state increases from 5 to 15, the empirical distributions stay roughly constant. Vanilla MCTS, on the other hand, is affected quite strongly: from instances \texttt{D5} to \texttt{D15}, the number of subtrees of zero depth increases from approximately 55\% to 70\%, signifying that many parts of the tree are expanded unnecessarily. Hence, these findings support the observations of \cite{Bertsimas2014} that vanilla MCTS suffers as the action branching factor increases. Our results also suggest that Primal-Dual MCTS is a viable alternative to remedy this issue. 

Table \ref{table:cputimes} provides some additional statistics: the CPU time per iteration (on a 2.6 GHz Intel Xeon E5-4650L processor) and the average number of expanded decisions per state. The trade-off for a more focused decision tree (i.e., more depth) within a fixed number of iterations is that information relaxation dual bound increases computation time by a factor of 4.5, on average. Furthermore, the number of expanded decisions per state is effectively halved by Primal-Dual MCTS.

\begin{table}[h]
\centering
\small
\begin{tabular}{@{}llllll@{}}\toprule
 CPU Time/Iter. (sec.) & Instance \texttt{D5}\quad  & Instance \texttt{D10}\quad & Instance \texttt{D15} \\
 \midrule
Vanilla MCTS  & 2.85 & 3.63  & 3.71  \\
Primal-Dual MCTS \quad \quad & 16.50  &	15.10  &	13.86  \\
\bottomrule
\end{tabular}

\vspace{10pt}

\begin{tabular}{@{}llllll@{}}\toprule
 Expansions/Node & Instance \texttt{D5}\quad  & Instance \texttt{D10}\quad & Instance \texttt{D15} \\
 \midrule
Vanilla MCTS  & 2.40	& 2.82 & 4.30 \\
Primal-Dual MCTS \quad \quad & 1.40 &	1.42 &	1.44 \\
\bottomrule
\end{tabular}
\caption{Per Iteration CPU Time (seconds) \& Average Number of Expanded Decisions per Node}
\label{table:cputimes}
\end{table}

Because of the problem size, we are unable to examine optimality of the decisions recommended by either of the algorithms. However, on the smallest problem instance \texttt{D5}, we observed that both Primal-Dual MCTS and vanilla MCTS \emph{converged to the same decision} (a ``consensus decision''). We are thus able to quantify, in these cases, the amount of improvement that information relaxation dual bounds can bring to vanilla MCTS. The improvement can be measured in terms of either the number of iterations or the amount of CPU time saved before the consensus decision is reached. 

Figure \ref{fig:lines} illustrates the behavior of the two algorithms at the root node for \texttt{D5}, where the red line indicates the value of the consensus decision and the blue lines indicate the values of the remaining decisions. Convergence occurs when the red line becomes the maximizer over all decisions. Notice that vanilla MCTS expands all five decisions and converges at iteration 65{,}511 while Primal-Dual MCTS only expands three decisions and converges at iteration 2{,}795. Therefore, taking into account the CPU time per iteration from Table \ref{table:cputimes}, we see that Primal-Dual MCTS requires $(2{,}795 \cdot 16.5)/(60{,}511 \cdot 2.85) \approx 27\%$ of the CPU time to discover the same solution found by vanilla MCTS. Thus, Figure \ref{fig:lines} suggests that Primal-Dual MCTS is more likely to recommend an optimal decision given a CPU budget, which is an important consideration in many applications. For example, it is often the case that in gameplay AI, a fixed amount of CPU usage is allocated per ``move.''

\begin{figure}[!ht]
\small
        \centering
        \begin{subfigure}[b]{0.47\textwidth}
                \centering
                \includegraphics[width=\textwidth]{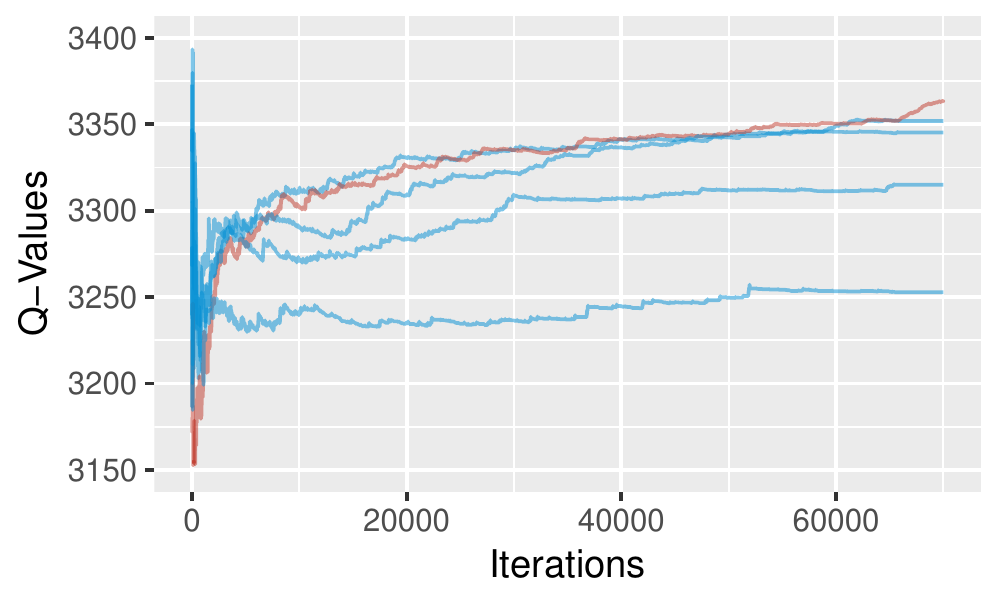}
                \caption{\texttt{D5}, Vanilla MCTS}
        \end{subfigure}
                \begin{subfigure}[b]{0.47\textwidth}
                \centering
                \includegraphics[width=\textwidth]{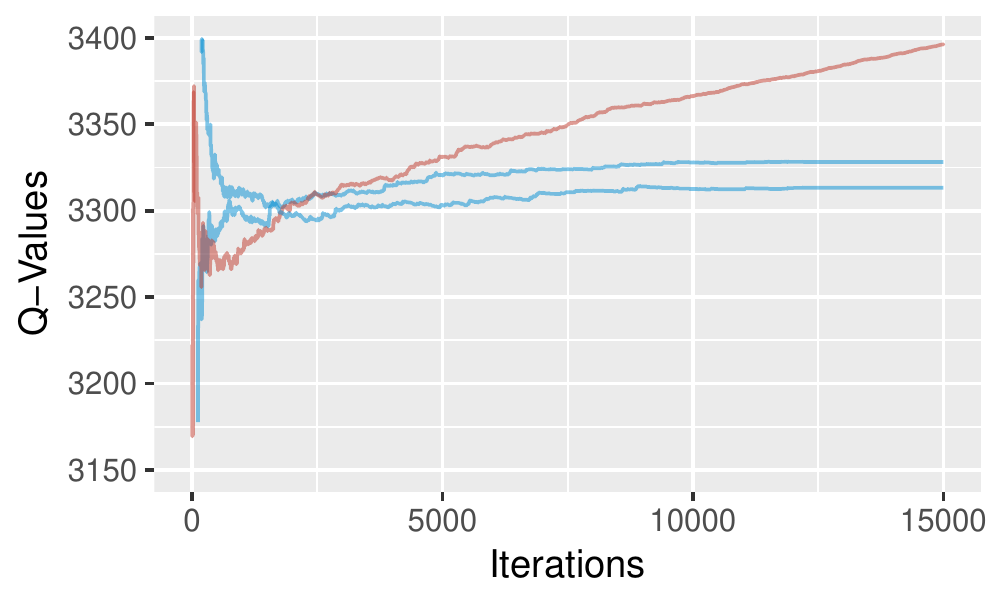}
                \caption{\texttt{D5}, Primal-Dual MCTS}
                \label{subfig:lines2}
        \end{subfigure}
        \caption{Convergence of $Q$-Values of State-Action Nodes}
        \label{fig:lines}
\end{figure}

To generate these plots and test for convergence, we observed 250{,}000 iterations of vanilla MCTS and 25{,}000 iterations of Primal-Dual MCTS. Every run of the algorithm on each problem instance thus requires \emph{5-8 days} of CPU time. Unfortunately, we are unable to run the larger problems, \texttt{D10} and \texttt{D15}, for enough iterations such that a single consensus decision is obtained by both vanilla MCTS and Primal-Dual MCTS. Thus, we cannot exactly quantify the benefits of using information relaxation bounds in \texttt{D10} and \texttt{D15}.

 However, we are able to give some insight into the behavior of Primal-Dual MCTS on these larger problems. Figure \ref{fig:linestenfifteen} shows that in both \texttt{D10} and \texttt{D15}, four decisions are expanded and convergence occurs. An interesting behavior of Primal-Dual MCTS is clearly displayed in Figure \ref{subfig:lines2}: due to noise, initial estimates of suboptimal decisions are high, which delays the expansion of the optimal decision until nearly iteration 500 when the noise is smoothed away. Once it is expanded, the optimal decision is quickly seen to be the best. Although we cannot directly compare with vanilla MCTS due to computational limitations, Figure \ref{fig:linestenfifteen} gives evidence of continued good behavior of Primal-MCTS on problems with larger action spaces.

\begin{figure}[!ht]
\small
        \centering
        \begin{subfigure}[b]{0.47\textwidth}
                \centering
                \includegraphics[width=\textwidth]{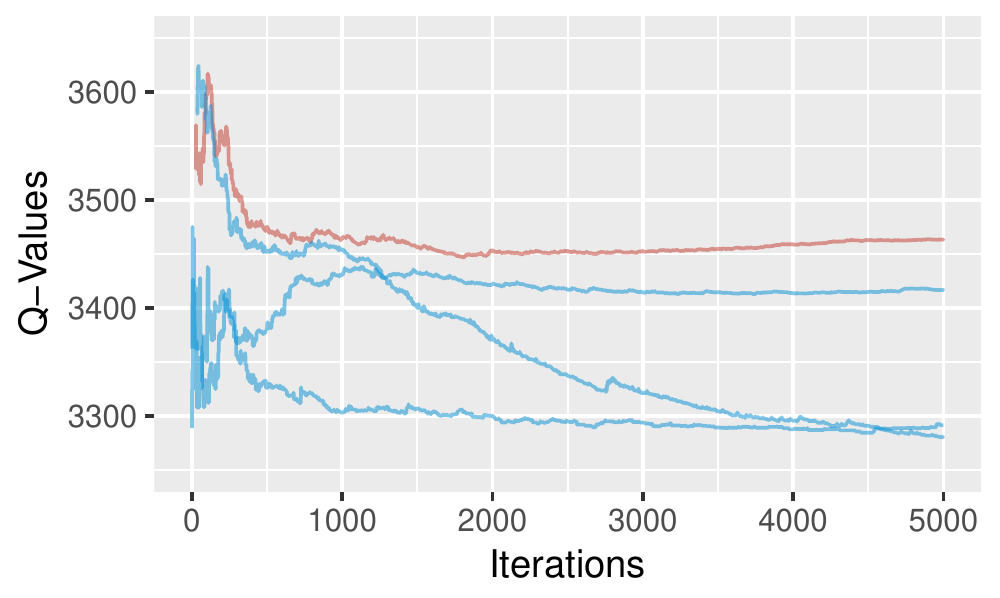}
                \caption{\texttt{D10}, Primal-Dual MCTS}
        \end{subfigure}
                \begin{subfigure}[b]{0.47\textwidth}
                \centering
                \includegraphics[width=\textwidth]{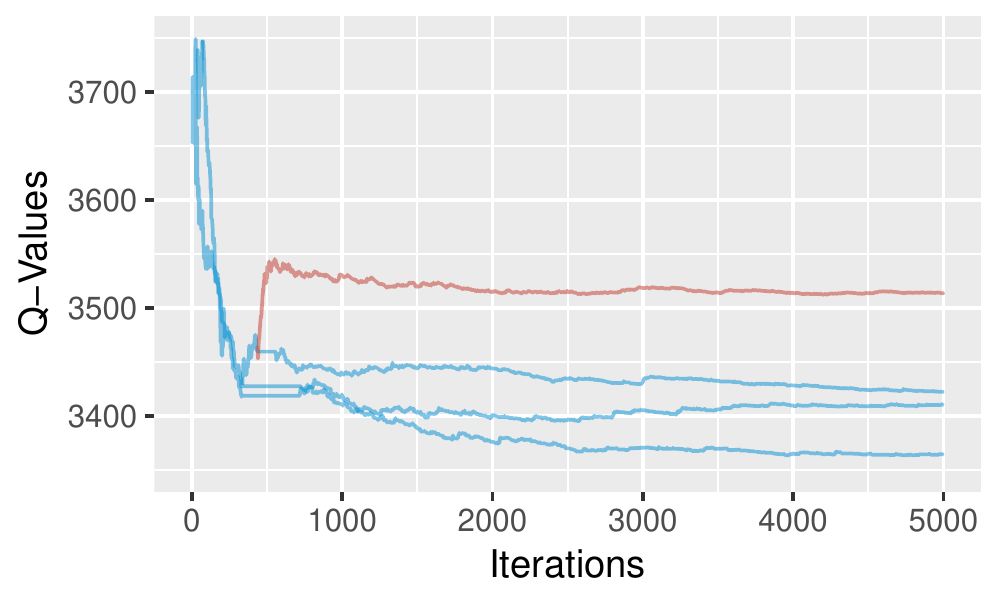}
                \caption{\texttt{D15}, Primal-Dual MCTS}
        \end{subfigure}
        \caption{Convergence of $Q$-Values of State-Action Nodes}
        \label{fig:linestenfifteen}
\end{figure}

\section{Conclusion}
\label{sec:conc}
In this paper, we study a new algorithm called Primal-Dual MCTS that attempts to address (1) the requirement that convergence of MCTS requires the full expansion of the decision tree and (2) the challenge of applying MCTS on problems with large action spaces \citep{Bertsimas2014}. To do so, we introduce the idea of using samples of an information relaxation upper bound to guide the tree expansion procedure of standard MCTS. It is shown that Primal-Dual MCTS converges to the optimal action at the root node, even if the entire tree is not fully expanded in the limit (an assumption typically required by MCTS convergence results). We also introduce the application of optimizing a single driver operating on a ride-sharing network subject to a real dataset of taxi rides occurring over a day in New Jersey. The empirical results indicate that Primal-Dual MCTS can significantly reduce the computation time needed to converge to an optimal root node decision when compared to vanilla MCTS.

\clearpage
\appendix

\section{Proofs}
\label{sec:proofs}
\lemio*
\begin{proof}
 These results can be argued by moving down the tree. First, $\x_0 = s_0$ is visited infinitely often by default. Assume the lemma holds for all nodes down to all state nodes at time $t$. Let $\y_{t} = (\x_t, a_t) \in \mathcal Y_t$ where $\P(\y_t \in \mathcal Y^\infty)>0$. Clearly, $\{\x_t \in \mathcal X^\infty\} \subseteq \{\y_t \in \mathcal Y^\infty\}$. Define $\eta^i(\x_t)$ to be the random variable representing the $i$-th iteration for which that $\x_t$ is visited. By the induction hypothesis, it must be the case that on $\{\y_t \in \mathcal Y^\infty\}$, the sequence $(\eta^1(\x_t), \eta^2(\x_t), \eta^3(\x_t), \ldots)$ is a subsequence of $(1, 2, 3, \ldots)$ that goes to $\infty$. Hence, it follows by Assumption \ref{ass:one}(i) that
 \[
\sum_{i = 1}^\infty \P( \pis(\x_t, \mathscr T^{\eta^i(\x_t)-1}) =  \y_t \,  | \, \y_t \in \mathcal Y^\infty) = \infty.
 \]
Applying the Borel-Cantelli lemma \citep{Breiman1992} with the probability measure $\P( \; \cdot \;  | \, \y_t \in \mathcal Y^\infty)$, we can conclude that $v^n(\y_t) \rightarrow \infty$ almost everywhere on the event that $\y_t$ is expanded, $\{\y_t \in \mathcal Y^\infty\}$.

Next, let $\y_t' = (\x_t',a_t') \in \mathcal Y_t$. A similar argument using Assumption \ref{ass:one}(iii) and another application of the Borel-Cantelli lemma implies that $l^n(\y_t') \rightarrow \infty$ almost everywhere on the event $\{\y_t' \in \tilde{\mathcal Y}^\infty\}$. We use the fact that if $\y_t' \in \tilde{\mathcal Y}^\infty$ then $\x_t' \in \mathcal X^\infty$ and by the induction hypothesis, $\x_t'$ is visited infinitely often. Thus, the lemma holds down to arbitrary state-action nodes $\y_t \in \mathcal Y_t$.

Now, by Case 2 of the expansion step, we know that only states that can be reached with positive probability are added. Therefore, a similar argument as the above along with Assumption \ref{ass:one}(ii) and the Borel-Cantelli lemma show that almost everywhere on $\{\x_{t+1} \in \mathcal X^\infty\}$, the node $\x_{t+1}$ is visited infinitely often. By induction, the proof is complete.
\end{proof}

\lemargmax*
\begin{proof}
Consider any $\omega$ in the event defined in the statement of the lemma and fix this $\omega$ throughout this proof. Suppose for this sample path $\omega$ that there is an optimal state-action node that is never expanded, i.e., $\y_t^* = (\x_t,a_t^*) \in \tilde{\mathcal Y}^\infty$.
By Lemma \ref{lem:io}, we know that $l^n(\y_t^*) \rightarrow \infty$ and let us denote the set of iterations for which $\y_t^* \in \Atilde^n(\x_t)$ (i.e., is a candidate action) by $\mathcal N_x(\x_t,\y_t^*) \subseteq \mathcal N_x$. Then, since this optimal action was never expanded, it must hold true that for any iteration $n \in \mathcal N_x(\x_t,\y_t^*)$, the dual bound approximation does not exceed the current value approximation (i.e., ``current best''):
\[
\bar{u}^n(\y_t^*) \le (1-\lambda^{n-1}) \, \tilde{V}^{n-1}(\x_t) + \lambda^{n-1} \max\nolimits_{\y_t \in \mathcal Y^{n-1}(\x_t)} \bar{Q}^{n-1}(\y_t).
\]
Since $|\mathcal N_x(\x_t,\y_t^*)| = \infty$, we may pass to the limit and utilize conditions (i) and (ii) in the statement of the lemma along with Assumption \ref{ass:one}(v) to obtain $u^{\nu}(\y_t^*) \le \max_{\y \in \mathcal Y^\infty(\x_t)} Q^*(\y).$ Because we have made the assumption that $\mathcal Y^\infty(\x_t)$ does not contain an optimal action,
\begin{equation}
u^{\nu}(\y_t^*) \le   \max\nolimits_{\y_t \in \mathcal Y^\infty(\x_t)} Q^*(\y_t) < V^*(\x_t).
\label{eq:contra1}
\end{equation}
On the other hand, applying Proposition \ref{prop:upperbound} to the optimal state-action pair $\y_t^*$, we see that
\[
u^{\nu}(\y_t^*) \ge Q^*(\y_t^*) = V^*(\x_t),
\]
a contradiction with (\ref{eq:contra1}). Thus, we conclude that our original assumption was incorrect and it must be the case an optimal state-action node is expanded in the limit for our fixed $\omega$. Since the sample path $\omega$ was arbitrary, the conclusion holds for any $\omega \in A$.
\end{proof}

\mainthm*

\begin{proof}
Before moving on to the main proof, we state a result that will be useful later on. Let $\y_t \in \mathcal Y$. For any $k_0 \in \mathbb N$, let $\alpha_{k_0}^{k}(\y_t)$ be a stepsize where 
\[
\sum\nolimits_{k=k_0+1}^\infty \alpha_{k_0}^{k}(\y_t) = \infty \quad \text{and} \quad \sum\nolimits_{k=k_0+1}^\infty (\alpha_{k_0}^{k}(\y_t))^2 < \infty \quad a.s.
\]
For any initial value $\bar{Q}_{k_0}^{k_0}(\y_t)$, consider the iteration
\begin{equation}
\bar{Q}_{k_0}^{k+1}(\y_t) = \bar{Q}_{k_0}^k(\y_t) + \alpha_{k_0}^{k+1}(\y_t) \, \bigl[\bar{V}^k((\y_t,S_{t+1})) - \bar{Q}^k_{k_0}(\y_t) \bigr] \; \; \text{for} \; k \ge k_0,
\label{eq:auxiter}
\end{equation}
where $S_{t+1} \sim \P(\; \cdot \; | \, \y_t)$ and $\bar{V}^k(\y) \rightarrow V^*(\y)$ almost surely for every $\y$. We can write 
\begin{align*}
\bar{V}^k&((\y_t,S_{t+1})) - \bar{Q}^k_{k_0}(\y_t) \\
&= Q^*(\y_t)  - \bar{Q}^k_{k_0}(\y_t) + V^*((\y_t,S_{t+1})) - Q^*(\y_t) + \bar{V}^k((\y_t,S_{t+1})) - V^*((\y_t,S_{t+1})).
\end{align*}
Since $\bar{V}^k((\y_t,S_{t+1})) - V^*((\y_t,S_{t+1})) \rightarrow 0$ almost surely and $\E[V^*((\y_t,S_{t+1})) - Q^*(\y_t) \, | \, \y_t] = 0$, it follows by \cite[Theorem 2.4]{Kushner2003} that
\begin{equation}
\bar{Q}_{k_0}^k(\y_t) \rightarrow Q^*(\y_t) \quad a.s.
\label{eq:auxconv}
\end{equation}
The conditions of \cite[Theorem 2.4]{Kushner2003} are not difficult to check given our setting with bounded contributions.

With this auxiliary result in mind, we move on to the proof of Theorem \ref{thm:main}. We aim to show via induction that for any $\x_{t+1} \in \mathcal X_{t+1}$,
\begin{align}
\bar{V}^n(\x_{t+1}) &\rightarrow V^*(\x_{t+1}) \, \indicate{\x_{t+1} \in \mathcal X_{t+1}^\infty} \quad a.s.,\label{eq:indconv1}
\end{align}
This means that convergence to the optimal value function occurs on the event that the node is expanded. Otherwise, the value function approximation stays at its initial value of zero. We proceed by backwards induction from $t+1=T$. The base case is clear by (\ref{eq:simobs}) and because $V^*(\x_T) =0$ for $\x_T \in \mathcal X_T$. The induction hypothesis is that (\ref{eq:indconv1}) holds for an arbitrary $t+1$.

In order to complete the inductive step, our first goal is to show that the state-action value function converges on the event that the node $\y_t = (\x_t,a_t) \in \mathcal Y_t$ is expanded:
\begin{align}
\bar{Q}^n(\y_{t}) \rightarrow Q^*(\y_{t}) \, \indicate{\y_{t} \in \mathcal Y_{t}^\infty} \quad a.s.,\label{eq:indconv2}
\end{align}
We will give an $\omega$-wise argument. Let us fix an $\omega \in \Omega$. If $\y_t \not \in \mathcal Y_t^\infty$, then $\bar{Q}^n(\y_t)$ is never updated and thus converges to zero. Now suppose that the node is expanded, i.e., $\y_t \in \mathcal Y_t^\infty$. Let $\alpha_v^n(\y_t) = 1/v^n(\y_{t}) \, \mathbf{1}_{\{\y_t \in \, \x_{\tau_s}^n\}}$, where the notation $\y \in \x_{\tau_s}^n$ is used to indicate that $\y$ is visited on iteration $n$. Thus, we can rewrite (\ref{eq:backprop1}) and (\ref{eq:backprop4}) in the form of a stochastic approximation step as follows:
\begin{equation}
\bar{Q}^n(\y_t) = \bar{Q}^{n-1}(\y_t) + \alpha_v^n(\y_t) \bigl[\bar{V}^n(\pis(\y_t,\mathscr T^{n-1})) - \bar{Q}^n(\y_t) \bigr].
\label{eq:stochapprox}
\end{equation}
We will analyze the tail of this iteration. Since $v^n(\y_t) \rightarrow \infty$ by Lemma \ref{lem:io} and $|\mathcal S|$ is finite, it is clear that there exists an iteration $N^*$ (depends on $\omega$) after which all state nodes $\x_{t+1} = (\y_t,s_{t+1})$ where $s_{t+1}$ is reachable with positive probability are expanded. By Assumption \ref{ass:one}(iii), the tail of the iteration (\ref{eq:stochapprox}) starting at $N^*$ is equivalent to
\[
\bar{Q}_{N^*}^{n+1}(\y_t) = \bar{Q}_{N^*}^n(\y_t) + \alpha_v^{n+1}(\y_t) \, \bigl[\bar{V}^n((\y_t,S_{t+1})) - \bar{Q}^n_{N^*}(\y_t) \bigr] \; \; \text{for} \; n \ge N^*,
\]
where $\bar{Q}_{N^*}^{N^*}(\y_t) = \bar{Q}^{N^*}(\y_t)$. Define $\eta^i(\y_t)$ be the $i$-th iteration for which that $\y_t$ is visited. By Lemma \ref{lem:io}, we know that $(\eta^1(\y_t), \eta^2(\y_t), \eta^3(\y_t), \ldots)$ is a subsequence of $(1, 2, 3, \ldots)$ that goes to $\infty$. Let $i^*$ be the smallest $i$ such that $ \eta^i(\y_t) > N^*$. Hence,
\[
\sum\nolimits_{n=N^*+1}^\infty \alpha_v^{n}(\y_t) = \sum\nolimits_{i = i^*}^\infty \alpha_v^{\eta^i(\y_t)}(\y_t) = \sum\nolimits_{i = i^*}^\infty 1/i = \infty.
\]
Similarly, $\sum\nolimits_{n=N^*+1}^\infty (\alpha_v^{n}(\y_t))^2 < \infty$. Since $(\y_t,S_{t+1}) \in \mathcal X_{t+1}^\infty$ for every realization of $S_{t+1}$, by the induction hypothesis, it holds that $\bar{V}^n(\y_t,S_{t+1}) \rightarrow V^*(\y_t,S_{t+1})$. Thus, by the auxiliary result stated above in (\ref{eq:auxconv}), it follows that
$\bar{Q}_{N^*}^{n}(\y_t) \rightarrow Q^*(\y_t)$ and so we can conclude that
\begin{equation}
\bar{Q}^n(\y_t) \rightarrow Q^*(y_t)
\end{equation}
for when our choice of $\omega$ is in $\{\y_t \in \mathcal Y_t^\infty\}$, which proves (\ref{eq:indconv2}).

The next step is to examine the dual upper bounds. Analogously, we would like to show:
\begin{align}
\bar{u}^n(\y_{t}) \rightarrow u^{\nu}(\y_t) \, \indicate{\y_{t} \in \tilde{\mathcal Y}^\infty_t} \quad a.s.\label{eq:indconv3}
\end{align}
The averaging iteration (\ref{eq:duallookahead}) can be written as
\[
\bar{u}^n(\y_t' ) = \bar{u}^{n-1}(\y_t') - \alpha^n(\y_t') \bigl[ \bar{u}^{n-1}(\y_t' ) -  \hat{u}^n(\y_t' ) \bigr],
\]
where $\E[\hat{u}^n(\y_t')] = u^{ \nu}(\y_t')$. There is no bias term here. Under Lemma \ref{lem:io}, Assumption \ref{ass:one}(iv), and our finite model and bounded contributions, an analysis similar to the case of (\ref{eq:indconv2}) allows us to conclude (\ref{eq:indconv3}).

Finally, we move on to completing the inductive step, i.e., (\ref{eq:indconv1}) with $t$ replacing $t+1$. Consider $\x_t \in \mathcal X_t$ and again let us fix an $\omega \in \Omega$. If $\x_t \not \in \mathcal X_t^\infty$, then there are no updates and we are done, so consider the case where $\x_t \in \mathcal X_t^\infty$. The conditions of Lemma \ref{lem:argmax} are verified by (\ref{eq:indconv2}) and (\ref{eq:indconv3}), so the lemma implies that an optimal action $a_t^* \in \argmax_{a \in \mathcal A} Q^*(\x_t,a)$ is expanded. Consequently, it follows by (\ref{eq:indconv2}) that 
\[
\max\nolimits_{\y_t \in \mathcal Y^\infty(\x_t)} \bar{Q}^n(\y_t) \rightarrow Q^*(\y_t^*) = V^*(\x_t).
\]
By Assumption \ref{ass:one}(v) and the backpropagation update (\ref{eq:backprop2}), we see that $\bar{V}^n(\x_t) \rightarrow V^*(\x_t)$, which proves (\ref{eq:indconv1}). We have shown that all value function approximations converge appropriately for expanded nodes in random limiting tree $\mathscr T^\infty$.

Now we move on to the second part of the theorem, which concerns the action taken at the root node. If the optimal action is unique (i.e., there is separation between the best action and the second best action), then (\ref{eq:indconv1}) allows us to conclude that the limit of the set of maximizing actions $\argmax_{\y \in \mathcal Y^n(\x_0)} \bar{Q}^n(\y)$ is equal to $\argmax_{\y_0 = (\x_0,a)} Q^*(\y_0)$. However, if uniqueness is not assumed (which we have not), then we may conclude that each accumulation point of $\argmax_{\y \in \mathcal Y^n(\x_0)} \bar{Q}^n(\y)$ is an optimal action at the root node:
\[
\limsup_{n \rightarrow \infty} \, \argmax_{\y \in \mathcal Y^n(\x_0)} \bar{Q}^n(\y) \subseteq \argmax_{\y_0 = (\x_0,a)} Q^*(\y_0) \; \; a.s.
\]
The proof is complete.
\end{proof}



\clearpage
\bibliographystyle{plainnat}
\bibliography{/Users/drjiang/Documents/Dropbox/Pittsburgh/Bibtex/Bib,/Users/drjiang/Documents/Dropbox/Pittsburgh/Bibtex/Risk,/Users/drjiang/Documents/Dropbox/Pittsburgh/Bibtex/EV,/Users/drjiang/Documents/Dropbox/Pittsburgh/Bibtex/MCTS}

\end{document}